\documentclass[preprint,3p,times]{elsarticle}
\usepackage{amsmath,amssymb}
\usepackage[all]{xy}
\usepackage{latexsym}
\usepackage{amsthm,a4wide,color}
\usepackage{amsmath,amscd,verbatim}
\usepackage{hyperref}
\usepackage{mathrsfs}
\usepackage{graphicx}

\input diagxy
\newcommand\thda{\mathrel{\rotatebox[origin=c]{-90}{$\twoheadrightarrow$}}}
\newcommand\thua{\mathrel{\rotatebox[origin=c]{90}{$\right) $}}}
\theoremstyle{plain}
\newtheorem{thm}{Theorem}[section]
\newtheorem{lem}[thm]{Lemma}
\newtheorem{prop}[thm]{Proposition}
\newtheorem{cor}[thm]{Corollary}
\newtheorem{fa}[thm]{Fact}
\theoremstyle{definition}
\newtheorem{defn}[thm]{Definition}
\newtheorem{exmp}[thm]{Example}

\begin{document}
\newcommand{\Ran}{\mathrm{Ran}}
\newcommand{\id}{\mathrm{id}}
\newcommand{\p}{{\preccurlyeq}}
\newcommand{\pp}{{\preceq}}
\newcommand{\sq}{{\sqsubseteq}}
\newcommand{\va}{{\vartriangle}}
\newcommand{\ua}{{\uparrow}}
\newcommand{\da}{{\downarrow}}
\newcommand{\la}{{\lambda}}
\newcommand{\al}{{\alpha}}
\newcommand{\ra}{{\rightarrow}}

\numberwithin{equation}{section}

\renewcommand{\theequation}{\thesection.\arabic{equation}}

\begin{frontmatter}

\title{ When is the convolution  a t-norm on normal, convex and  upper semicontinuous fuzzy truth values?}

\author{Jie Sun\corref{cor}}
\ead{jiesun1027@163.com}

\cortext[cor]{Corresponding author.}
\address{School of Mathematics,  Southwest Minzu University, Chengdu 610041, China}

\begin{abstract}
In Type-2 rule-based  fuzzy systems (T2 RFSs), triangular norms on complete lattice  \((\mathbf{L},\sq)\)  or \((\mathbf{L_u},\sq)\) can be used to model the compositional rule of inference, where  \(\textbf{L}\) is the set of all convex normal fuzzy truth values,  \(\mathbf{L_u}\) is the set of all convex normal and upper semicontinuous fuzzy truth values,  and \(\sq\) is the so-called  convolution order. Hence, the  choice of t-norms  on \((\mathbf{L},\sq)\) or \((\mathbf{L_u},\sq)\) may influence the performance  of T2 RFSs, and thus, it is significant to broad the set of t-norms among which domain  experts can choose most suitable one. To construct t-norms on \((\mathbf{L},\sq)\) or \((\mathbf{L_u},\sq)\),  the mainstream method is based on convolution  \(\ast_\va\)  induced by two operators \(\ast\) and \(\va\) on the unit interval \([0,1]\).  Recently, we have  complete solve the question when   convolution  \(\ast_\va\) is a t-norm on \((\mathbf{L},\sq)\).  This paper  aim to provide the  necessary and sufficient conditions under which convolution  \(\ast_\va\)  is a t-norm on \((\mathbf{L_u}, \sq)\).\end{abstract}

\begin{keyword}
Type-2 fuzzy sets\sep  type-2 rule-based  fuzzy systems \sep triangular norms \sep convolutions\sep  convex \sep normal  \sep upper semicontinuous
\end{keyword}
\end{frontmatter}

\section{Introduction}

Triangular norms, or t-norms for short, were introduced by Schweizer and Sklar to generalize the triangle inequality from metric spaces to probabilistic metric spaces \cite{s1}. Later on,  t-norms are often used for modeling the intersection of fuzzy sets \cite{a} and   the conjunction in fuzzy logics \cite{h}. Furthermore, the applications of t-norms in  generalized measures and integrals, fuzzy control and many other areas can be found in the literature, for example, a well known one is \cite{Klement2000}. 

As the generalization of type-1 fuzzy sets,  type-2 fuzzy sets were introduced by Zadeh in 1975 \cite{za75}.  In  type-2 fuzzy set,  the  membership degree of a primary element is usually a fuzzy truth value (the  map from \([0,1]\) to \([0,1]\)).  Since type-2 fuzzy sets can better model the  fuzziness in real life, and were widely used in approximation \cite{me05}, aggregation \cite{rh01}, control \cite{ca05}, database \cite{ni06}, ect in recent decades.

In T2 RFS, t-norms on complete lattice  \((\mathbf{L},\sq)\)  or \((\mathbf{L_u},\sq)\) can be used to model the compositional rule of inference \cite{me24,su25-11,su25}, where \(\mathbf{L}\) is the set of all  normal convex fuzzy truth values,  \(\mathbf{L_u}\) is the set of all  normal convex and upper semicontinuous fuzzy truth values, and \(\sq\) is the so-called convolution order (or extended order). Hence, the  choice of t-norms  on \((\mathbf{L},\sq)\) or \((\mathbf{L_u},\sq)\) may influence the performance  of T2 RFS. Therefore, it is significant to broad the set of t-norms among which domain  experts can choose most suitable one. 

Convolution operator  has a close relation with  Zadeh's extension principle \cite{za75}, and have been widely studied as   (generalized) extended t-norm  \cite{he22,wu20,zh19}, uninorm \cite{li22,xi18}, implication \cite{wa15,wa18}, aggregation \cite{ta14,to17}, overlap function \cite{ji22,li20}, etc.

In terms of the construction of t-norms on  \((\mathbf{L},\sq)\)  or \((\mathbf{L_u},\sq)\),  a key problem is when convolution operator is a t-norm. In paper \cite{su25-11}, we have given  the necessary and sufficient conditions such that the convolution is a t-norm on  \((\mathbf{L},\sq)\).   However, there is few results about the conditions such that convolution is a t-norm on \((\mathbf{L_u},\sq)\).  In this paper, we aim to give the necessary and sufficient conditions such that the convolution is a t-norm on \((\mathbf{L_u},\sq)\).

 The paper are organized as follows. Section 2 reviews  definitions and properties of t-norms, \(t_r\)-norms,  fuzzy truth values, and convolutions. Section 3 is devoted to the proof of our main results.

\section{Preliminaries}

\subsection{Triangular norms}

Let  \((P,\le, 0_p, 1_p)\) be a bounded poset  with minimum \(0_p\) and maximum \(1_p\). A binary function \(\ast:P^2\to P\) is  called a t-norm, if the following four axioms are satisfied:
\begin{itemize}
\item[(T1)]     \(x\ast y = y\ast x\);
\item[(T2)]     \((x\ast y)\ast z=x\ast (y\ast z)\);
\item[(T3)]     \(x\ast y\le  x\ast z\) whenever \(y\le z\);
\item[(T4)]   \(x\ast 1_p=x\).
\end{itemize}
\begin{exmp}
The product t-norm \(T_p:[0,1]^2\to[0,1]\)  and  the {\L}uckasiewicz  t-norm \(T_{L}:[0,1]^2\to[0,1]\),  are respectively  given by \[T_p(x,y)=x \cdot y,\quad  T_{L}(x,y)=\max\{x+y-1,0\}.\]  
\end{exmp}
 A t-norm \(\ast:[a,b]^2\to [a,b]\) is called to be 
 \begin{itemize}
\item \emph{continuous}, if it is continuous as a binary function. 
\item \emph{left continuous}, if for any monotone  increasing sequences \(\{x_n\}_{n\in \mathbb{N}}\) and \(\{y_n\}_{n\in  \mathbb{N}}\), \[\lim  x_n\ast y_n= \lim x_n \ast  \lim y_n.\]
\item \emph{right continuous}, if for any monotone  decreasing sequences \(\{x_n\}_{n\in \mathbb{N}}\) and \(\{y_n\}_{n\in  \mathbb{N}}\), \[\lim  x_n\ast y_n= \lim x_n \ast  \lim y_n.\]
\item  \emph{conditionally cancellative},  if  \(x_1\ast y= x_2\ast y> a\) implies  \(x_1=x_2\).  

 \end{itemize}
  The following proposition is obtained from the monograph \cite{al06}.

 \begin{prop} 
 A t-norm \(\ast:[a,b]^2\to[a,b]\) is  continuous (left continuous, right continuous)  if and only if   for each \(x\in[a,b]\), the section \(x\ast(-):[a,b]\to [a,b]\) is continuous (left continuous, right continuous).
\end{prop}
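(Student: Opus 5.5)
The plan is to prove necessity directly and sufficiency by combining monotonicity (T3) with commutativity (T1). Commutativity turns continuity of every section $x\ast(-)$ into continuity of every section $(-)\ast y$, since $(-)\ast y = y\ast(-)$. Monotonicity then lets us sandwich $x_n\ast y_n$ between values in which only one variable moves at a time.

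\emph{Necessity.} Assume $\ast$ is continuous (left continuous, right continuous). Fix $x$ and take the constant sequence $x_n=x$. It is monotone, so it is allowed in the definitions of left and right continuity. For any admissible sequence $y_n\to y$ we then get $x\ast y_n\to x\ast y$, which is exactly continuity (left continuity, right continuity) of the section $x\ast(-)$.

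\emph{Sufficiency, left continuous case.} Let $x_n\uparrow x$ and $y_n\uparrow y$.
\begin{itemize}
\item By (T3) and (T1), the sequence $x_n\ast y_n$ is increasing and bounded above by $x\ast y$. So $L=\lim x_n\ast y_n$ exists and $L\le x\ast y$.
\item Fix $m$. For $n\ge m$, monotonicity gives $x_n\ast y_n\ge x_m\ast y_n$.
\item Letting $n\to\infty$ and using left continuity of the section $x_m\ast(-)$, we get $L\ge x_m\ast y$.
\item Now let $m\to\infty$. Since $x_m\ast y=y\ast x_m$, left continuity of the section $y\ast(-)$ gives $x_m\ast y\to y\ast x=x\ast y$. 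Hence $L\ge x\ast y$, so $L=x\ast y$.
\end{itemize}
The right continuous case is the same argument with all inequalities reversed.

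\emph{Sufficiency, continuous case.} Here the sequence $(x_n,y_n)\to(x,y)$ need not be monotone, and this is the only real obstacle. I will use an $\varepsilon$-argument, again exploiting monotonicity so that only one variable is perturbed at a time. Given $\varepsilon>0$:
\begin{itemize}
\item By continuity of $x\ast(-)$, choose $\delta_1>0$ with $x\ast(y-\delta_1)>x\ast y-\varepsilon/2$. If $y=a$, replace $y-\delta_1$ by $a$; this truncation at the endpoints is trivial.
\item Since $(-)\ast(y-\delta_1)=(y-\delta_1)\ast(-)$ is continuous, choose $\delta_2>0$ with $(x-\delta_2)\ast(y-\delta_1)>x\ast(y-\delta_1)-\varepsilon/2$.
\end{itemize}
For $x'\ge x-\delta_2$ and $y'\ge y-\delta_1$, monotonicity then yields
\[
x'\ast y'\ \ge\ (x-\delta_2)\ast(y-\delta_1)\ >\ x\ast y-\varepsilon .
\]
Symmetrically, using $x+\delta$ and $y+\delta$ truncated at $b$, we get $x'\ast y'<x\ast y+\varepsilon$ whenever $x'\le x+\delta_2'$ and $y'\le y+\delta_1'$. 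Since eventually $|x_n-x|$ and $|y_n-y|$ fall below these $\delta$'s, we obtain $|x_n\ast y_n-x\ast y|<\varepsilon$ for all large $n$. This gives joint continuity.
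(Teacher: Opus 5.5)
Your proof is correct. The paper gives no proof of this proposition; it only attributes it to the monograph \cite{al06}. So there is no route in the paper to compare against, and your argument is a self-contained proof of the standard fact that a commutative operation on $[a,b]^2$ that is monotone in each variable inherits joint (one-sided) continuity from the (one-sided) continuity of its sections.

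You use exactly the right ingredients. (T1) transfers continuity of the sections $x\ast(-)$ to the sections $(-)\ast y$. (T3) lets you control a two-variable perturbation by moving one variable at a time. Both the monotone-sequence argument for the one-sided cases and the $\varepsilon$--$\delta$ argument for the continuous case go through. In the latter, the ``truncation'' just means choosing $\delta_1\le y-a$ when $y>a$, and using $y=a$ itself otherwise.

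If you want to avoid the $\varepsilon$--$\delta$ bookkeeping, you can derive the continuous case from the two one-sided cases you have already proved. Continuous sections are both left and right continuous, so $\ast$ is left and right continuous. Given $(x_n,y_n)\to(x,y)$, set $u_n=\inf_{k\ge n}x_k$, $v_n=\inf_{k\ge n}y_k$, $U_n=\sup_{k\ge n}x_k$ and $V_n=\sup_{k\ge n}y_k$. Then $u_n\uparrow x$, $v_n\uparrow y$, $U_n\downarrow x$, $V_n\downarrow y$, and by monotonicity $u_n\ast v_n\le x_n\ast y_n\le U_n\ast V_n$. The two outer sequences converge to $x\ast y$, which squeezes $x_n\ast y_n\to x\ast y$.
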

 \subsection{Fuzzy truth values}

A \emph{fuzzy truth value} is a map from the unit interval \([0,1]\) to itself.
Let \(\mathbf{M}\) be the set of all fuzzy truth values.  A \emph{type-2 fuzzy set} \(A\) is determined  by a  membership function \[\mu_A:  X\to \mathbf{M}.\]

 For each  \(\alpha\in [0,1]\), the \(\alpha\)-cut and  strong \(\alpha\)-cut of  \(f\in \mathbf{M}\) are respectively given by  
\[f^{\alpha}=\{x\in[0,1]\mid f(x)\ge \alpha\},\]
\[f^{\hat{\alpha}}=\{x\in[0,1]\mid f(x)> \alpha\}.\]

\begin{defn}
 A fuzzy truth value \( f\in \textbf{M} \) is called to be  
 \begin{itemize}
 \item \emph{normal}, if \(\sup\{f(x)\mid x\in[0,1]\}=1\).
 \item \textit{convex}, if \(x\le y\le z\) implies \(f(x)\wedge f(z)\le f(y)\).
 \item \textit{upper semicontinuous},  if  \(f^\alpha\) is closed for all \(\alpha\in[0,1]\). 
 \end{itemize}
\end{defn}

There exist some equivalent statements of the concept  \textit{convex}.  (see e.g. \cite{wa05}).

\begin{prop}\label{10}For \(f\in \mathbf{M}\), the following statements are equivalent.
\begin{itemize}
\item[(i)] \(f\) is convex.
\item[(ii)]  \(f^{\alpha}\) is  convex  for each \(\alpha\in [0,1]\).
\item[(iii)]  \(f^{\hat{\alpha}}\) is  convex  for each \(\alpha\in [0,1]\).
 \item[(iv)] there exists a  monotone increasing map \(f_1\) and a  monotone decreasing map \(f_2\) such that   \(f(x)=f_1(x)\wedge f_2(x).\)
\end{itemize}
\end{prop}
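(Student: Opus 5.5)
We prove the equivalences through the cycle (i)\(\Rightarrow\)(ii)\(\Rightarrow\)(iii)\(\Rightarrow\)(iv)\(\Rightarrow\)(i), using only the definitions of convexity and of the (strong) \(\alpha\)-cuts. Throughout, a subset of \([0,1]\) is convex if it contains every point lying between two of its points, i.e. it is an interval, possibly empty.

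For (i)\(\Rightarrow\)(ii), fix \(\alpha\in[0,1]\) and take \(x\le y\le z\) with \(x,z\in f^{\alpha}\). Convexity of \(f\) gives \(f(y)\ge f(x)\wedge f(z)\ge\alpha\), so \(y\in f^{\alpha}\).

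For (ii)\(\Rightarrow\)(iii), observe that
\[f^{\hat{\alpha}}=\bigcup_{\beta>\alpha} f^{\beta}.\]
When \(\alpha=1\) this union is empty, which is trivially convex. Otherwise the sets \(f^{\beta}\) form a nested family, decreasing as \(\beta\) increases. Given \(x\le y\le z\) with \(x,z\in f^{\hat\alpha}\), let \(\beta=f(x)\wedge f(z)>\alpha\). Then \(x,z\in f^{\beta}\), so (ii) gives \(y\in f^{\beta}\subseteq f^{\hat\alpha}\).

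For (iii)\(\Rightarrow\)(iv), define
\[f_1(x)=\sup_{t\le x} f(t),\qquad f_2(x)=\sup_{t\ge x} f(t).\]
Clearly \(f_1\) is increasing, \(f_2\) is decreasing, and \(f\le f_1\wedge f_2\). For the reverse inequality, suppose \(f(x)<f_1(x)\wedge f_2(x)\) and let \(\alpha=f(x)\). By the definition of the suprema there are \(t\le x\le s\) with \(f(t)>\alpha\) and \(f(s)>\alpha\). Then \(t,s\in f^{\hat\alpha}\) but \(x\notin f^{\hat\alpha}\), contradicting (iii). This is the only step needing a construction, and the main care point is choosing the strong cut at level exactly \(f(x)\).

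For (iv)\(\Rightarrow\)(i), take \(x\le y\le z\). Monotonicity gives \(f_1(y)\ge f_1(x)\ge f(x)\) and \(f_2(y)\ge f_2(z)\ge f(z)\). Hence
\[f(y)=f_1(y)\wedge f_2(y)\ge f(x)\wedge f(z).\]
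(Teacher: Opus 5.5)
Your proof is correct: every link of the cycle (i)\(\Rightarrow\)(ii)\(\Rightarrow\)(iii)\(\Rightarrow\)(iv)\(\Rightarrow\)(i) holds. That includes the key step in (iii)\(\Rightarrow\)(iv), where taking the strong cut at level exactly \(\alpha=f(x)\) gives points \(t\le x\le s\) in \(f^{\hat{\alpha}}\) with \(x\notin f^{\hat{\alpha}}\). The paper gives no proof of this proposition and only refers to the literature. Your decomposition \(f_1(x)=\sup_{t\le x}f(t)\), \(f_2(x)=\sup_{t\ge x}f(t)\) is the standard one, so your argument supplies the omitted standard proof; the remark about nested unions in (ii)\(\Rightarrow\)(iii) is harmless but unnecessary.
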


Let \(\mathbf{L}\) denote the set of all normal convex fuzzy truth values, and 
let \(\mathbf{L_u}\) denote the set of all normal convex  and upper semicontinuous   fuzzy truth values.

\begin{lem}\label{1}
Suppose that \(f\in \mathbf{L_u}\) and  the sequence \(\{a_i\}\) is monotone with limit \(a_0\).  Then
\begin{itemize}
\item[(i)] If \(\{f(a_i)\}\) is monotone increasing, then \(\sup f(a_i)\le f(a_0)\).
\item[(i)] If \(\{f(a_i)\}\) is monotone decreasing, then \(\inf f(a_i)=f(a_0)\).
\end{itemize}
\end{lem}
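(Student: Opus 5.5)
The plan is to prove (i) from upper semicontinuity alone, and (ii) by combining convexity (for one inequality) with upper semicontinuity (for the other). If $\{a_i\}$ is eventually equal to $a_0$, both parts are immediate, so throughout I assume $a_i\neq a_0$ for infinitely many $i$. Passing to a subsequence, I may then assume $a_i\neq a_0$ for all $i$; monotonicity of both sequences and the value of the limit are unchanged.

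For (i), put $\alpha=\sup_i f(a_i)$ and fix any $\beta<\alpha$. Since $\{f(a_i)\}$ is increasing, $f(a_i)\ge\beta$ for all large $i$. So a tail of $\{a_i\}$ lies in the cut $f^{\beta}$. This cut is closed because $f$ is upper semicontinuous, so the limit $a_0$ also lies in it, i.e. $f(a_0)\ge\beta$. Letting $\beta\uparrow\alpha$ gives $f(a_0)\ge\alpha$. The same argument shows more generally that $f(a_0)\ge\limsup_i f(a_i)$ for any sequence $a_i\to a_0$. In particular, in (ii) we get $f(a_0)\ge\lim_i f(a_i)=\inf_i f(a_i)$ for free.

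For the reverse inequality in (ii), $f(a_0)\le f(a_i)$ for every $i$, I use convexity. Say $a_i\uparrow a_0$; the decreasing case is symmetric. Suppose $f(a_0)>f(a_k)$ for some $k$. For $j>k$ we have $a_k\le a_j\le a_0$, so convexity gives
$$f(a_j)\ge f(a_k)\wedge f(a_0)=f(a_k).$$
Since $\{f(a_i)\}$ is decreasing, also $f(a_j)\le f(a_k)$. Hence $f(a_j)=f(a_k)$ for all $j\ge k$, so the sequence $\{f(a_i)\}$ is eventually constant. Whenever this is excluded, for instance when $\{f(a_i)\}$ is strictly decreasing, we reach a contradiction. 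Then $f(a_0)\le f(a_i)$ for all $i$, and together with the usc inequality this gives $\inf_i f(a_i)=f(a_0)$.

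The main obstacle is exactly this eventually constant case, and I expect the statement needs a small qualification there. Take $f=\tfrac12$ on $[0,1)$ and $f(1)=1$. This $f$ is normal, convex and upper semicontinuous. With $a_i=1-\tfrac1i$, the sequence $f(a_i)=\tfrac12$ is (non-strictly) decreasing, yet $\inf_i f(a_i)=\tfrac12<1=f(1)$. So in the proof I would state (ii) in the form actually justified by the argument above:
\begin{itemize}
\item[(a)] $\inf_i f(a_i)\le f(a_0)$ always holds, with equality when $\{f(a_i)\}$ is not eventually constant;
\item[(b)] if $\{f(a_i)\}$ is eventually constant with value $c$ and $f(a_0)>c$, then the convexity computation shows that $f$ equals $c$ on the whole interval between $a_k$ and $a_0$, excluding $a_0$.
\end{itemize}
Before relying on the lemma later in the paper, I would check that each of its applications falls under case (a), i.e. that the equality part is only used when $\{f(a_i)\}$ is not eventually constant.
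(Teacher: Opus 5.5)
Your argument is correct. For everything the paper actually proves, it coincides with the paper's proof, whose entire content is ``if not, \(f\) is not upper semicontinuous''. That is exactly your closed-cut argument. It yields (i), and in (ii) it yields only the inequality \(\inf_i f(a_i)\le f(a_0)\). It cannot yield the reverse inequality \(f(a_0)\le\inf_i f(a_i)\): a violation of that means \(f(a_0)\) is too \emph{large}, and upper semicontinuity never forbids that.

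Your counterexample is valid. The function \(f=\tfrac12\) on \([0,1)\), \(f(1)=1\) lies in \(\mathbf{L_u}\). The constant sequence \(f(a_i)=\tfrac12\) counts as monotone decreasing in the paper's non-strict usage. That non-strict reading is forced anyway, because the subsequences extracted in Lemma~\ref{2.9} can only be guaranteed to be non-strictly monotone. So (ii) as stated is false. Your convexity argument correctly shows that equality holds whenever \(\{f(a_i)\}\) is not eventually constant.

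One correction to your plan for auditing the applications: the equality in (ii) is \emph{not} only invoked for sequences that fail to be eventually constant, so that check would not go through.
\begin{itemize}
\item In Lemma~\ref{2.9}, the extracted sequences \(f(a_{n_i})\) and \(g(b_{n_i})\) may well be eventually constant.
\item In the last step of the necessity proof, nothing prevents \(\{a\va g(x_n)\}\) from being eventually constant. Indeed, a t-norm \(\va\) that is constant in \(y\) on some interval \((b,b+\varepsilon)\) but jumps down at \(b\) is exactly what that step must rule out.
\end{itemize}

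What saves every application is that only the unconditional inequality \(f(a_0)\ge\inf_i f(a_i)\) is ever needed.
\begin{itemize}
\item In Case~2 of Lemma~\ref{2.9}, monotonicity and right continuity of \(\va\) give \(f(a)\va g(b)\ge(\inf_i f(a_{n_i}))\va(\inf_i g(b_{n_i}))=\lim_i f(a_{n_i})\va g(b_{n_i})\ge\al\).
\item Case~3 is analogous, with \(\sup_i f(a_{n_i})\) in the first slot.
\item In the necessity proof, \(\inf_n a\va y_n\ge a\va b\) is automatic because \(y_n\ge b\) and \(\va\) is monotone. The other direction is exactly the upper-semicontinuity inequality for \(f\ast_\va g\in\mathbf{L_u}\) at \(u\ast v\).
\end{itemize}
So the right repair is simply to replace (ii) by \(\inf_i f(a_i)\le f(a_0)\). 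The rest of the paper goes through unchanged.
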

\begin{proof}
If not, it is easy to check that \(f\) is not upper semicontinuous.
\end{proof}

\begin{prop}\label{2.5} 
If   \(f\in \mathbf{L_u}\), then \(f\) can attains value 1.
\end{prop}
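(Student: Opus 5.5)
The plan is to use compactness of the cuts together with normality. Since \(f\in\mathbf{L_u}\), every cut \(f^{\alpha}\) is closed in \([0,1]\), hence compact. By normality, \(\sup\{f(x)\mid x\in[0,1]\}=1\). Therefore, for every \(\alpha<1\) there is some \(x\) with \(f(x)>\alpha\), so \(f^{\alpha}\neq\emptyset\) for all \(\alpha\in[0,1)\).

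First I will fix the increasing sequence \(\alpha_n=1-\frac1n\). The cuts \(f^{\alpha_n}\) then form a decreasing chain of nonempty closed subsets of the compact space \([0,1]\). By the finite intersection property (Cantor's intersection theorem), \(\bigcap_n f^{\alpha_n}\neq\emptyset\). Any \(x_0\) in this intersection satisfies \(f(x_0)\ge 1-\frac1n\) for all \(n\), so \(f(x_0)\ge 1\). Since \(f\) takes values in \([0,1]\), this gives \(f(x_0)=1\).

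As an alternative that stays closer to Lemma~\ref{1}, I could pick points \(x_n\) with \(f(x_n)\ge 1-\frac1n\) and pass to a monotone subsequence converging to some \(a_0\) (Bolzano–Weierstrass). The obstacle is that \(f(x_{n})\) need not be monotone along the subsequence. To fix this, I would instead argue directly from the closedness of \(f^{\alpha}\). For each fixed \(k\), the tail \(x_n\) with \(n\ge k\) lies in \(f^{1-1/k}\), so the limit \(a_0\) lies in \(f^{1-1/k}\). Hence \(f(a_0)\ge 1-\frac1k\) for every \(k\), and \(f(a_0)=1\).

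The only delicate point is making sure the nonemptiness of each cut really follows from normality when the supremum is not attained a priori. This holds because \(\alpha_n<1\). Convexity is not needed for this argument.
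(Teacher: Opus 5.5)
Your proof is correct, and its core is the same as the paper's: the nonempty nested closed cuts \(f^{1-1/n}\) in the compact interval \([0,1]\) must share a point. What differs is how that point is obtained.

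The paper picks the specific points \(a_i=\min f^{1-1/i}\); the minimum exists because each cut is closed and nonempty. This choice makes \(\{a_i\}\) nondecreasing, and it also makes \(\{f(a_i)\}\) nondecreasing. Indeed, if \(i<j\) and \(f(a_i)>f(a_j)\ge 1-\frac1j\), then \(a_i\in f^{1-1/j}\), so \(a_j\le a_i\le a_j\). This forces \(a_i=a_j\), a contradiction. The first part of Lemma~\ref{1} then applies at \(a_0=\sup_i a_i\) and gives \(f(a_0)\ge\sup_i f(a_i)=1\).

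This is exactly the monotonicity obstacle you identified for an arbitrary selection \(x_n\). The paper removes it by choosing leftmost points. You remove it by not needing monotonicity at all: you use Cantor's intersection theorem, or closedness of each fixed cut along a convergent subsequence.

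Your route is shorter and self-contained. It needs neither Lemma~\ref{1} nor the monotonicity of \(f(a_i)\), which the paper calls ``obvious'' but which takes the small argument above. The paper's route produces an explicit witness: in fact \(a_0=\min f^{1}\), the leftmost point at which \(f\) equals \(1\). As you note, both arguments use only normality and upper semicontinuity, not convexity.
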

\begin{proof}  For each \(i\in \mathbb{N}^+\),   let  \[a_i=\min \{x\in [0,1]\mid f(x)\ge 1-\frac{1}{i}\}.\] Obviously, both \(\{a_i\}\) and \(\{f(a_i)\}\) are monotone increasing. Suppose \(\sup \{a_i\}=a_0\). By Lemma \ref{1}, \[f(a_0)\ge \sup f(a_i)=1,\]  hence, \(f(a_0)=1\).
\end{proof}
\subsection{Convolutions}

\begin{defn}\label{de} \label{df} Let   \(\ast:[0,1]^2\to[0,1]\) and \(\va:[0,1]^2\to[0,1]\) be two binary  operations on \([0,1]\). The \emph{convolution} induced by  \(\ast\) and  \(\va\) is a binary operation \(\ast_\va:\mathbf{M}^2\to \mathbf{M}\)  given by \[(f \ast_\va g)(x)=\bigvee_{a\ast b=x} f(a) \va g(b).\label{1.1}\] In particular,  if there not exist \((a,b)\in [0,1]^2\)
 such that \(a\ast b=x\),  then \[(f \ast_\va g)(x)=\bigvee \varnothing=0.\] \end{defn}

It is easy to see that for any convolution \(\ast_\va\),  maps \(f,g\in \mathbf{M}\) and numbers \(a,b\in [0,1]\),
\begin{equation}\label{2.1} (f\ast_\va g)(a\ast b)=\bigvee_{x\ast y=a\ast b}f(x)\va g(y)\ge f(a)\va g(b).\end{equation}

\begin{prop}\cite{su25-11}\label{.14}
For any convolution \(\ast_\va\),  if  \(\va\) is monotone increasing in each place, then for each \( f,g\in \mathbf{M}\) and \(\alpha\in [0,1]\), \[ (f\ast_\va g)^{\hat{\alpha}}=\bigcup_{\alpha_1\va \alpha_2>\alpha} f^{\alpha_1}\ast g^{\alpha_2}.\]
\end{prop}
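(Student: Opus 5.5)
We prove the two inclusions separately, using the hypothesis that \(\va\) is monotone increasing in each place.

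\emph{Right-hand side is contained in the left.} Take \(\alpha_1,\alpha_2\) with \(\alpha_1\va\alpha_2>\alpha\), and take \(x\in f^{\alpha_1}\ast g^{\alpha_2}\). Then \(x=a\ast b\) for some \(a\) with \(f(a)\ge\alpha_1\) and some \(b\) with \(g(b)\ge\alpha_2\). By (\ref{2.1}) and monotonicity of \(\va\),
\[
(f\ast_\va g)(x)\ge f(a)\va g(b)\ge \alpha_1\va\alpha_2>\alpha .
\]
Hence \(x\in (f\ast_\va g)^{\hat{\alpha}}\).

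\emph{Left-hand side is contained in the right.} Let \(x\in (f\ast_\va g)^{\hat{\alpha}}\), so
\[
\bigvee_{a\ast b=x} f(a)\va g(b)>\alpha .
\]
A supremum that strictly exceeds \(\alpha\) has some member strictly exceeding \(\alpha\). So there is a pair \((a,b)\) with \(a\ast b=x\) and \(f(a)\va g(b)>\alpha\). In particular the index set is nonempty; if it were empty, the supremum would be \(0\le\alpha\), which is impossible.

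Put \(\alpha_1=f(a)\) and \(\alpha_2=g(b)\). Then \(\alpha_1\va\alpha_2>\alpha\), \(a\in f^{\alpha_1}\) and \(b\in g^{\alpha_2}\). Therefore \(x=a\ast b\in f^{\alpha_1}\ast g^{\alpha_2}\), which is one of the sets in the union.

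Here \(f^{\alpha_1}\ast g^{\alpha_2}\) means the set \(\{u\ast v\mid u\in f^{\alpha_1},\, v\in g^{\alpha_2}\}\).

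The argument is routine. The only step that needs care is in the second inclusion: we use strict inequality with \(\hat{\alpha}\), which is exactly what lets us extract a single witness pair from the supremum. With non-strict cuts \(f^{\alpha}\) this step would fail, since a supremum can equal \(\alpha\) without being attained. Monotonicity of \(\va\) is used only in the first inclusion.
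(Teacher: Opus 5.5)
Your proof is correct. The first inclusion uses (\ref{2.1}) together with monotonicity of \(\va\). The second extracts a witness pair \((a,b)\) with \(a\ast b=x\) and \(f(a)\va g(b)>\alpha\) from the strict inequality, then takes \(\alpha_1=f(a)\) and \(\alpha_2=g(b)\).

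The paper does not reprove this proposition; it quotes it from the earlier work on \(\mathbf{L}\). Your double-inclusion argument is the standard direct proof of it.
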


\begin{defn} The  partial orders \(\sq\)  on \(\mathbf{M}\) defined  by \[f\sqsubseteq g, \text{ if  } f\wedge_{\wedge} g=f\]
is called  to be \emph{convolution order}.
\end{defn}
It is known that \((\mathbf{M}, \sqsubseteq)\) is an inf semilattice \cite{wa05}, and the bounded posets \((\mathbf{L},\sqsubseteq,\bar{0},\bar{1})\),  \((\mathbf{L_u},\sqsubseteq,\bar{0},\bar{1})\)  are  a complete lattice and a  completely distributive lattice, respectively (see \cite{ha08}).

\subsection{\(t_r\)-norms}

For any \(A\subseteq[0,1]\), let \(\bar{A}:[0,1]\to[0,1]\) denote the \emph{characteristic function} of \(A\), that is \[\bar{A}(x)=\begin{cases}1,\quad  \text{if } x\in A,\\ 0, \quad \text{otherwise}.\end{cases}\]  Put  \[\mathbf{J}=\{\bar{x}\mid x\in[0,1]\},\]  
and  \[\mathbf{J}^{[2]}=\{\overline{[a,b]}\mid 0\le a \le b \le 1\}.\]

Similar to the definition of  \(t_r\)-norm on \((\mathbf{L},\sq)\) given in \cite{he14}, we give the definition of  \(t_r\)-norm on \((\mathbf{L_u},\sq)\) as follows.
\begin{defn}
A binary operation \(T : \mathbf{L_u}^2\to \mathbf{L_u}\) is called a
\(t_r\)-norm on \((\mathbf{L_u},\sq)\),  if \(T\) is a t-norm on \((\mathbf{L_u},\sq)\), \(T\) is closed on \(\mathbf{J}\) and \(\mathbf{J}^{[2]},\) respectively, and \(T(\overline{[0, 1]}, \overline{[a,b]})=\overline{[0,b]}.\) 
\end{defn}

\begin{prop}\cite{su25-11}\label{8} If \(\ast_\va\) is  a   t-norm on \((\mathbf{L}_\mathbf{u},\sq)\),  then
\begin{itemize}
\item[(i)] \(\ast\)  is a  continuous t-norm and \(\va\) is a t-norm.
\item[(ii)] \(\ast_\va\) is  closed on  \(\mathbf{J}\) and \(\mathbf{J}^{[2]}\), respectively.
\item[(iii)]  \(\overline{[0, 1]}\ast_\va \overline{[a,b]}=\overline{[0,b]}.\)
\end{itemize}
\end{prop}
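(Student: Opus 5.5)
We derive the three items in turn, feeding well-chosen elements of \(\mathbf{L_u}\) into the convolution and reading off the operations on \([0,1]\). Throughout, \(\ast_\va\) satisfies (T1)--(T4) on \((\mathbf{L_u},\sq)\), and the top element is \(\bar 1\).

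\textbf{Step 1: \(\ast\) is a t-norm.} For \(a,b\in[0,1]\), a direct computation from Definition \ref{de} gives
\[(\bar a\ast_\va\bar b)(x)=\begin{cases}1\va 1,& x=a\ast b,\\ 0\va 0 \text{ or } 1\va 0 \text{ or } 0\va 1,&\text{otherwise}.\end{cases}\]
Suppose \(1\va1=1\) and \(\va\) vanishes when one argument is \(0\). Then \(\bar a\ast_\va\bar b=\overline{a\ast b}\). On \(\mathbf J\) the convolution order coincides with the usual order of \([0,1]\). Hence (T1)--(T4) for \(\ast_\va\) restricted to \(\mathbf J\) transfer to \(\ast\): commutativity, associativity, monotonicity, and \(a\ast1=a\) from \(\bar a\ast_\va\bar1=\bar a\). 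This argument also gives the \(\mathbf J\) part of (ii).

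\textbf{Step 2: \(\va\) is a t-norm.} To get these properties of \(\va\) in the first place, use constant-type elements such as \(f=\bar 1\vee \alpha\cdot\overline{[0,1]}\), meaning \(f(1)=1\) and \(f(x)=\alpha\) otherwise. This \(f\) lies in \(\mathbf{L_u}\): it is convex, normal, and its cuts are \([0,1]\) or \(\{1\}\), both closed. Compute \(f\ast_\va g\) at points \(x<1\) with \(g\) of the same type at level \(\beta\). Since \(\ast\) will be a t-norm, \(a\ast b=x<1\) forces \(a<1\) or \(b<1\), so the value involves \(\alpha\va1\), \(1\va\beta\) and \(\alpha\va\beta\). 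The identity \(f\ast_\va\bar1=f\) then gives \(\alpha\va1=\alpha\). Commutativity and associativity of \(\va\) are extracted by evaluating the identities \(f\ast_\va g=g\ast_\va f\) and \((f\ast_\va g)\ast_\va h=f\ast_\va(g\ast_\va h)\) at a suitable point, for instance \(x=0\), where \(0\ast0=0\) is realized by \(a=b=0\). Monotonicity of \(\va\) comes from (T3) applied to two such elements that are comparable in \(\sq\). Steps 1 and 2 are somewhat circular, since each uses properties of the other operation. I would first prove the weak facts \(1\va1=1\) and \(0\va t=0\) directly from \(\bar1\) being the unit together with normality, then bootstrap.

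\textbf{Step 3: continuity of \(\ast\).} This is the main obstacle. I would argue by contradiction: suppose some section \(a\ast(-)\) has a jump at \(b_0\). First, \(\ast_\va\) must map into \(\mathbf{L_u}\), so convexity of the result is required. Take \(g=\overline{[b_1,b_2]}\) straddling \(b_0\). Then \(\bar a\ast_\va g\) is the characteristic function of the image \(a\ast[b_1,b_2]\), which has a gap and so is not convex. This gives continuity of each section, hence joint continuity by the first proposition (from \cite{al06}) on sections. Alternatively, if the jump is only one-sided in a way that preserves convexity, the failure is of closedness of a cut, contradicting upper semicontinuity.

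\textbf{Step 4: items (ii) and (iii).} Once \(\ast\) is continuous, \(\overline{[a,b]}\ast_\va\overline{[c,d]}=\overline{[a\ast c,b\ast d]}\), since the image of a rectangle under a continuous monotone map is that interval. This gives closure on \(\mathbf J^{[2]}\). For (iii), \([0,1]\ast[a,b]=[0\ast a,1\ast b]=[0,b]\).
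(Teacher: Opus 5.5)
\textbf{Gap in Step 2.} Steps 1, 3 and 4 are sound. Once \(\bar a\ast_\va\bar b=\overline{a\ast b}\), the embedding \(a\mapsto\bar a\) transfers (T1)--(T4) to \(\ast\). The gap-in-the-image argument with \(\bar a\ast_\va\overline{[b_1,b_2]}\) gives continuity of every section, so your ``alternatively'' clause never arises. Items (ii) and (iii) then follow as you say.

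The genuine gap is Step 2: your test elements cannot detect \(\va\). Your \(f_\alpha\) (\(f_\alpha(1)=1\), \(f_\alpha=\alpha\) elsewhere) peaks at \(1\), which is the unit of \(\ast\). For every \(x<1\), including \(x=0\), the decompositions \(x=x\ast1=1\ast x\) always contribute \(\alpha\va1\) and \(1\va\beta\) to \((f_\alpha\ast_\va f_\beta)(x)\). Consequently, for \emph{every} operation \(\va\) with unit \(1\) and \(\va\le\wedge\) (commutative or not, associative or not, monotone or not) one gets \(f_\alpha\ast_\va f_\beta=f_{\alpha\vee\beta}\). Likewise \(f_\alpha\wedge_\wedge f_\beta=f_{\alpha\vee\beta}\), so on these elements \(\sq\) is just the reverse of \(\le\). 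Commutativity, associativity and (T3) of \(\ast_\va\), evaluated on these elements, therefore hold identically and say nothing about \(\alpha\va\beta\). Your unit argument at \(x<1\) has a further problem: it also needs \(\alpha\va0=0\) for all \(\alpha\), which your crisp computations do not supply.

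\textbf{The fix.} Put the peak at the annihilator \(0\) instead of at the unit. Let \(h_\alpha(0)=1\) and \(h_\alpha(x)=\alpha\) for \(x\in(0,1]\); this \(h_\alpha\) lies in \(\mathbf{L_u}\).
\begin{itemize}
\item Once \(\ast\) is a t-norm, \(a\ast b\le a\wedge b\), so \(a\ast b=x>0\) forces \(a,b>0\). Hence \(h_\alpha\ast_\va h_\beta=h_{\alpha\va\beta}\), where the value \(1\) at \(0\) comes from \(1\va1=1\).
\item In the same way \(h_\alpha\wedge_\wedge h_\beta=h_{\alpha\wedge\beta}\), so \(h_\alpha\sq h_\beta\) iff \(\alpha\le\beta\).
\item Thus \(\alpha\mapsto h_\alpha\) is an order embedding with \(h_\alpha\ast_\va h_\beta=h_{\alpha\va\beta}\), and (T1)--(T3) transfer directly to \(\va\).
\item The unit law follows from \((h_\alpha\ast_\va\bar1)(1)=\alpha\va1\), because \((1,1)\) is the only decomposition of \(1\).
\end{itemize}
These are the same peak-at-\(0\) elements the paper uses in Case 1 of its necessity proof; for this proposition itself the paper only cites earlier work.

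\textbf{The weak facts.} Your bootstrap for the weak facts is only announced, but it does go through with crisp elements.
\begin{itemize}
\item The law \(\overline{[0,1]}\ast_\va\bar1=\overline{[0,1]}\) forces \(1\va1=1\) or \(1\va0=1\).
\item The case \(1\va0=1\) is impossible: \(\bar1\ast_\va\bar c=\bar c\) would then force \(1\ast b=c\) for every \(c\) and every \(b\ne c\). Hence \(1\va1=1\).
\item With \(1\va1=1\), the laws \(\bar c\ast_\va\bar1=\bar1\ast_\va\bar c=\bar c\) give \(c\ast1=1\ast c=c\).
\item Then \(0\va1=1\va0=0\va0=0\). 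For the last one, use the pair \((0,0)\) in \(\bar c\ast_\va\bar1\) with \(c\notin\{0,0\ast0\}\).
\end{itemize}
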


By the above proposition, we have the following result immediately. 
\begin{prop}\label{0.1}
For any  two binary functions \(\ast\) and \(\va\)  on the unit interval \([0,1]\), the convolution \(\ast_\va\) is  a   t-norm on \((\mathbf{L}_\mathbf{u},\sq)\) if and only if it is a \(t_r\)-norm on  \((\mathbf{L}_\mathbf{u},\sq)\).
\end{prop}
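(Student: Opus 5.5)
The equivalence in Proposition \ref{0.1} follows by comparing the definition of a \(t_r\)-norm on \((\mathbf{L_u},\sq)\) with Proposition \ref{8}. We treat the two directions separately, and both are short.

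For the direction ``\(t_r\)-norm \(\Rightarrow\) t-norm'' there is nothing to prove. By definition, a \(t_r\)-norm on \((\mathbf{L_u},\sq)\) is a binary operation \(T:\mathbf{L_u}^2\to\mathbf{L_u}\) that is a t-norm on \((\mathbf{L_u},\sq)\) and satisfies some extra closure and boundary conditions. So if \(\ast_\va\) is a \(t_r\)-norm, then in particular it is a t-norm on \((\mathbf{L_u},\sq)\).

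For the converse, assume \(\ast_\va\) is a t-norm on \((\mathbf{L_u},\sq)\). We must check the three extra requirements in the definition of a \(t_r\)-norm.
\begin{itemize}
\item[(a)] \(\ast_\va\) is closed on \(\mathbf{J}\).
\item[(b)] \(\ast_\va\) is closed on \(\mathbf{J}^{[2]}\).
\item[(c)] \(\overline{[0,1]}\ast_\va\overline{[a,b]}=\overline{[0,b]}\) for all \(0\le a\le b\le 1\).
\end{itemize}
Conditions (a) and (b) are exactly Proposition \ref{8}(ii), and condition (c) is exactly Proposition \ref{8}(iii). Both apply because their only hypothesis is that \(\ast_\va\) is a t-norm on \((\mathbf{L_u},\sq)\).

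The only point needing care is that the objects involved lie in \(\mathbf{L_u}\), so that the closure statements make sense inside \((\mathbf{L_u},\sq)\). Each \(\bar{x}\in\mathbf{J}\) and each \(\overline{[a,b]}\in\mathbf{J}^{[2]}\) is normal, convex and upper semicontinuous, since its \(\alpha\)-cuts are either \([0,1]\) (for \(\alpha=0\)) or a closed interval or point. Hence \(\mathbf{J},\mathbf{J}^{[2]}\subseteq\mathbf{L_u}\), and the closure conditions are meaningful.

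All the substantive work is contained in Proposition \ref{8}, which is quoted from \cite{su25-11}, so the proof reduces to this comparison of definitions.
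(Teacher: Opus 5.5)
Your proposal is correct and matches the paper's argument. The paper also obtains this proposition immediately: the direction from \(t_r\)-norm to t-norm comes from the definition of a \(t_r\)-norm, and the converse comes from Proposition~\ref{8}(ii)--(iii); your extra check that \(\mathbf{J},\mathbf{J}^{[2]}\subseteq\mathbf{L_u}\) is a detail the paper leaves implicit.
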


\section{Main  Results}

In this section, we provide a complete answer to the
 question when convolution is a t-norm on \((\mathbf{L_u}, \sq)\)  as follows.
 \begin{thm}\label{0}
For any  two binary functions \(\ast\) and \(\va\)  on the unit interval \([0,1]\), the convolution \(\ast_\va\) is a t-norm on \((\mathbf{L_u}, \sq)\)  if and only if \(\ast\) is a continuous t-norm  and \(\va\) is a right continuous t-norm.
 \end{thm}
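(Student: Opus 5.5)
We prove the two implications separately, using Proposition \ref{8} for necessity and the $\alpha$-cut representation of Proposition \ref{.14} for sufficiency.

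\emph{Necessity.} Suppose $\ast_\va$ is a t-norm on $(\mathbf{L_u},\sq)$. Proposition \ref{8}(i) already gives that $\ast$ is a continuous t-norm and $\va$ is a t-norm, so only right continuity of $\va$ remains. We argue by contradiction. If $\va$ is not right continuous, then by the proposition from \cite{al06} some section $c\va(-)$ fails right continuity at a point $\beta_0<1$. Hence there are $\beta_n\downarrow\beta_0$ with $\lim_n c\va\beta_n=\gamma>c\va\beta_0$. We build two elements of $\mathbf{L_u}$ whose convolution is not upper semicontinuous. A natural choice is a step function $f$ taking value $c$ on an interval and $1$ at a point, and $g$ a convex, normal, upper semicontinuous function decreasing strictly from $1$ toward $\beta_0$. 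These are arranged so that the values $f(a)\va g(b_n)\ge c\va\beta_n$ are attained at points $x_n=a\ast b_n$ converging to a limit $x_0$. Continuity and monotonicity of $\ast$ (take for instance $\ast=\wedge$-like behaviour, or a generic continuous $\ast$ through a point where it is strictly increasing) force $(f\ast_\va g)(x_0)$ to equal only $c\va\beta_0<\gamma$. Then $(f\ast_\va g)^{\gamma}$ is not closed, so $f\ast_\va g\notin\mathbf{L_u}$, a contradiction. The delicate point is to choose $f,g$ so that the supremum at $x_0$ really cannot exceed $c\va\beta_0$. I would take $f=\overline{\{1\}}\vee c\,\overline{[0,1]}$, i.e.\ $f(1)=1$ and $f=c$ elsewhere, and let $g$ vary near $1$, using $1\ast y=y$ to control the preimages of $x_0$.

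\emph{Sufficiency.} Assume $\ast$ is a continuous t-norm and $\va$ is a right continuous t-norm. The result of \cite{su25-11} (the characterization for $\mathbf{L}$) tells us that $\ast_\va$ is a t-norm on $\mathbf{L}$ provided $\ast$ is continuous and $\va$ is a t-norm. It therefore suffices to show that $\mathbf{L_u}$ is closed under $\ast_\va$; the axioms (T1)--(T4) then restrict from $\mathbf{L}$, since the order on $\mathbf{L_u}$ is the induced convolution order. For closedness, fix $f,g\in\mathbf{L_u}$; we need every cut $(f\ast_\va g)^{\alpha}$ to be closed.
\begin{itemize}
\item First we show the supremum defining $(f\ast_\va g)(x)$ is attained. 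The set $\{(a,b): a\ast b=x\}$ is compact, and $f,g$ are upper semicontinuous.
\item Right continuity of $\va$, together with its monotonicity, makes $(s,t)\mapsto s\va t$ upper semicontinuous on $[0,1]^2$. Hence $(a,b)\mapsto f(a)\va g(b)$ is upper semicontinuous and attains its maximum on compact sets.
\item It follows that $(f\ast_\va g)^{\alpha}=\bigcup_{\alpha_1\va\alpha_2\ge\alpha} f^{\alpha_1}\ast g^{\alpha_2}$.
\item This set is closed. It is the image under the continuous map $(a,b)\mapsto a\ast b$ of the compact set $\{(a,b): f(a)\va g(b)\ge\alpha\}$, which is closed by upper semicontinuity.
\end{itemize}
Normality and convexity are inherited from the $\mathbf{L}$ result.

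I expect the main obstacle to be the necessity construction. It requires exhibiting concrete $f,g\in\mathbf{L_u}$ that witness failure of upper semicontinuity for an arbitrary continuous $\ast$, with uniform control of all preimages $a\ast b=x_0$.
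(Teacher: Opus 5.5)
\textbf{Sufficiency.} Your closure argument is correct, and neater than the paper's sequence argument in Lemma \ref{2.9}. Right continuity plus monotonicity makes $\va$ upper semicontinuous on $[0,1]^2$, so the maximum of $f(a)\va g(b)$ on the compact fibre $\{a\ast b=x\}$ is attained, and $(f\ast_\va g)^\alpha$ is a continuous image of a compact set.

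The gap is the reduction to $\mathbf{L}$. It is false that $\ast_\va$ is a t-norm on $\mathbf{L}$ whenever $\ast$ is a continuous t-norm and $\va$ is merely a t-norm. It fails for right continuous but not left continuous $\va$, which the theorem must cover. Take $\va$ to be the drastic product, which is right continuous, and $f(x)=x$ on $[0,1)$, $f(1)=0$. Then $f\in\mathbf{L}$, but $f\ast_\va f\equiv 0$ because $f$ never reaches $1$, so $\ast_\va$ does not even map $\mathbf{L}^2$ into $\mathbf{L}$. With $\ast=\wedge$, $g=\overline{[0,1]}$ and $h(x)=1-x$, associativity also fails on $\mathbf{L}$: $(f\ast_\va g)\ast_\va h=h$, while $f\ast_\va(g\ast_\va h)=\overline{\{0\}}$.

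So normality, convexity and especially associativity cannot be inherited; you must prove them on $\mathbf{L_u}$. Your attained maximum is exactly what associativity needs: it lets you pull a supremum through $\va$ using only monotonicity, as in Proposition \ref{2.13}. Normality comes from points with $f(a_0)=g(b_0)=1$. Convexity follows from Proposition \ref{.14} and Fact \ref{9}, since $f^1\ast g^1\neq\varnothing$.

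\textbf{Necessity.} This part is only a plan, and the concrete $f$ you propose ($f(1)=1$, $f\equiv c$ on $[0,1)$) provably cannot work. The pair $(1,x)$ gives $f\ast_\va g\ge g$. For $x<1$, every $b>x$ satisfies $a\ast b=x$ for some $a<1$ by the intermediate value theorem. Hence $(f\ast_\va g)(x)=g(x)\vee\sup_{b>x}(c\va g(b))$. Let $b^*=\max g^1$; then $g$ is decreasing on $[b^*,1]$. So this equals $g(x)\vee c$ for $x<b^*$ and $g(x)$ for $x\ge b^*$. That function is upper semicontinuous for \emph{every} $g\in\mathbf{L_u}$, every continuous $\ast$ and every t-norm $\va$, so no contradiction can arise.

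The missing idea is to place the peak of $f$ and the zeros of $f,g$ so that only the intended pairs reach the test points. This forces a case split via Proposition \ref{0.20}.
\begin{itemize}
\item If $\ast=\wedge$, the paper puts the peak of $f$ at the annihilator $0$ and takes $f,g$ decreasing. Then $(f\wedge_\va g)(z)=f(z)\va g(z)$, and left continuity of this decreasing upper semicontinuous function at $u$ yields $\inf_n a\va g(x_n)=a\va b$.
\item If $\ast\neq\wedge$, it works inside a summand where $\ast$ is conditionally cancellative (Lemma \ref{20}), with $f=0$ on $[0,u)$ and $g=0$ on $[0,v)$. These zeros kill the contribution of $f(1)=1$ at $u\ast x_n<v$. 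Cancellativity pins down all preimages of $u\ast v$ and $u\ast x_n$.
\end{itemize}
Your hint about a point where $\ast$ is strictly increasing does not handle $\ast=\wedge$. There no region is cancellative, so a separate construction is unavoidable.
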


\subsection{Sufficiency}

In the subsection, we aim to  prove the sufficiency of  Theorem \ref{0}.

\begin{fa}\label{9}
Suppose that \(\{A_i\}_{i\in  \Lambda}\) is a family of  convex subsets of  unit interval \([0,1]\).  If  \(\bigcap_{i\in  \Lambda} A_i\ne \varnothing\),  then   \(\bigcup_{i\in  \Lambda} A_i\) is convex.
\end{fa}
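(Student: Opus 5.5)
The statement to prove is Fact \ref{9}: a family of convex subsets of \([0,1]\) with a common point has a convex union. I will check convexity directly from the definition, using the common point as a bridge between any two members of the family.

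Fix a point \(c\in\bigcap_{i\in\Lambda}A_i\). Take \(x\le y\le z\) with \(x,z\in\bigcup_{i\in\Lambda}A_i\); the goal is \(y\in\bigcup_{i\in\Lambda}A_i\). Choose indices \(i,j\in\Lambda\) with \(x\in A_i\) and \(z\in A_j\).

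I split into two cases according to where \(y\) lies relative to \(c\).
\begin{itemize}
\item If \(y\le c\), then \(x\le y\le c\) with \(x,c\in A_i\). Convexity of \(A_i\) gives \(y\in A_i\).
\item If \(y\ge c\), then \(c\le y\le z\) with \(c,z\in A_j\). Convexity of \(A_j\) gives \(y\in A_j\).
\end{itemize}
These two cases exhaust all possibilities, since the order on \([0,1]\) is total.

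In either case \(y\in\bigcup_{i\in\Lambda}A_i\), so the union is convex. This is the \(\mathbb{R}\)-case of the general principle that a union of intervals sharing a point is an interval.

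There is no real obstacle here. The only point needing attention is to choose the index (\(i\) or \(j\)) according to which side of \(c\) the point \(y\) lies on. This works because every point lies on one side of \(c\) or the other.
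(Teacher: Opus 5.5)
Your proof is correct: pivoting on a common point \(c\) and using \(A_i\) or \(A_j\) according to whether \(y\le c\) or \(y\ge c\) is the standard verification. The paper states this as a Fact without giving any proof, so your argument supplies the routine check that it leaves implicit.
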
 

\begin{prop}\label{2.8}
If   \(\ast\) is a continuous t-norm and  \(\va\) is a right continuous t-norm, then for any  \(f,g\in \mathbf{L_u}\),   \(f\ast_\va g\) is convex  and  normal.
\end{prop}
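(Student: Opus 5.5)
Normality is the easy half, so I start there. By Proposition \ref{2.5} there are points $a_0,b_0\in[0,1]$ with $f(a_0)=g(b_0)=1$. Inequality (\ref{2.1}) gives
\[(f\ast_\va g)(a_0\ast b_0)\ge f(a_0)\va g(b_0)=1\va 1=1,\]
because $\va$ is a t-norm with neutral element $1$. Hence $f\ast_\va g$ attains the value $1$ and is normal.

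For convexity I will show that every strong cut $(f\ast_\va g)^{\hat{\alpha}}$, $\alpha\in[0,1]$, is convex; by Proposition \ref{10}(iii) this is enough.

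Since $\va$ is a t-norm, it is monotone increasing in each place, so Proposition \ref{.14} applies:
\[(f\ast_\va g)^{\hat{\alpha}}=\bigcup_{\alpha_1\va\alpha_2>\alpha} f^{\alpha_1}\ast g^{\alpha_2}.\]

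Each term of this union is convex. By Proposition \ref{10}(ii) and upper semicontinuity, $f^{\alpha_1}$ and $g^{\alpha_2}$ are closed convex subsets of $[0,1]$. They are nonempty: $\alpha_1\va\alpha_2>\alpha\ge0$ forces $\alpha_1,\alpha_2>0$, and the cuts contain $a_0$ and $b_0$ respectively. So they are compact intervals. Because $\ast$ is continuous, $f^{\alpha_1}\ast g^{\alpha_2}$ is the continuous image of the connected set $f^{\alpha_1}\times g^{\alpha_2}$. It is therefore connected, i.e.\ an interval, and hence convex.

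To conclude that the union is convex I invoke Fact \ref{9}. Every index pair $(\alpha_1,\alpha_2)$ satisfies $a_0\in f^{\alpha_1}$ and $b_0\in g^{\alpha_2}$, so $a_0\ast b_0$ lies in every member of the family. The intersection is thus nonempty, and Fact \ref{9} gives convexity of the union. If no pair satisfies $\alpha_1\va\alpha_2>\alpha$ (for instance when $\alpha=1$), the union is empty and trivially convex.

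The only delicate points are confirming that Proposition \ref{.14} applies with $\va$ merely monotone (which holds since $\va$ is a t-norm) and handling the degenerate cuts. Right continuity of $\va$ is not needed for this proposition; I expect it to be used later, for upper semicontinuity of $f\ast_\va g$.
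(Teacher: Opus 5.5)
Your proof is correct and follows essentially the same argument as the paper: normality comes from Proposition~\ref{2.5} and (\ref{2.1}), and convexity of each strong cut comes from Proposition~\ref{.14} and Fact~\ref{9}, using the common point $a_0\ast b_0$ (the paper uses $f^1\ast g^1$). You also spell out two points the paper leaves tacit: each $f^{\alpha_1}\ast g^{\alpha_2}$ is an interval because $\ast$ is continuous, and the case $\alpha=1$ is trivial; you are also right that right continuity of $\va$ plays no role here.
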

\begin{proof}
Since \(f,g\in \mathbf{L_u}\),  by Proposition \ref{2.5}, there exist \(a_0, b_0 \in[0,1]\) such that  \(f(a_0)=g(b_0)=1\).
By (\ref{2.1}), \[(f\ast_\va g)(a_0\ast b_0)\ge f(a_0)\va g(b_0)=1.\] Hence, \(f\ast_\va g\) is normal. 


For each \(\al\in[0,1)\),  \[\bigcap_{\alpha_1\va \alpha_2>\alpha} f^{\alpha_1}\ast g^{\alpha_2}\supseteq   f^1\ast g^{1}  \ne \varnothing\] By Propostion \ref{.14} and Fact \ref{9},    \[(f\ast_\va g)^{\hat{\alpha}}=\bigcup_{\alpha_1\va \alpha_2>\alpha} f^{\alpha_1}\ast g^{\alpha_2}\] is convex. 
Hence, \(f\ast_\va g\) is convex. \end{proof}

 The following lemma  is the key to prove   Proposition \ref{2.10}.
\begin{lem} \label{2.9}
Suppose that  \(\ast\) is a continuous t-norm and  \(\va\) is a right continuous t-norm, \(f,g\in \mathbf{L_u}\), and \(\al \in[0,1]\). Then  for any \(x\in\overline{ (f\ast_{\va} g)^\al}\), there exists \((a, b)\in [0,1]^2\) such that \(a\ast b=x\) and \(f(a)\va g(b)\ge \al\). 
\end{lem}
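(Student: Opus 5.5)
Here \(\overline{(f\ast_\va g)^\al}\) is the topological closure of the \(\al\)-cut. The idea is a compactness argument: approximate \(x\) by points of the cut, realize each by a pair \((a_n,b_n)\), pass to a convergent subsequence, and transfer the value estimate to the limit using upper semicontinuity of \(f,g\) and right continuity of \(\va\).

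\textbf{The case \(\al=0\).} Here we may take \(a=x\), \(b=1\), since \(x\ast 1=x\) and \(f(x)\va g(1)\ge 0\). So assume \(\al>0\).

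\textbf{Step 1: approximating pairs.} Pick \(x_n\in (f\ast_\va g)^\al\) with \(x_n\to x\). Since \((f\ast_\va g)(x_n)\ge\al\) is a supremum, for each \(n\) there is \((a_n,b_n)\) with \(a_n\ast b_n=x_n\) and \(f(a_n)\va g(b_n)> \al-\frac1n\). By compactness of \([0,1]^2\), pass to a subsequence with \(a_n\to a\), \(b_n\to b\). Continuity of \(\ast\) then gives \(a\ast b=\lim x_n=x\).

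\textbf{Step 2: the limit bound.} We need \(f(a)\va g(b)\ge\al\). Set \(\beta_n=f(a_n)\) and \(\gamma_n=g(b_n)\). Passing to a further subsequence, assume \(\beta_n\to\beta\) and \(\gamma_n\to\gamma\), each monotone.

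Upper semicontinuity gives \(f(a)\ge\beta\). Indeed, for any \(\epsilon>0\), eventually \(a_n\in f^{\beta-\epsilon}\), which is closed, so \(a\in f^{\beta-\epsilon}\). Hence \(f(a)\ge\beta\), and likewise \(g(b)\ge\gamma\). This is Lemma \ref{1} in spirit, but done directly through closed \(\alpha\)-cuts, without needing monotonicity of \((a_n)\).

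Since \(\va\) is monotone, it remains to show \(\beta\va\gamma\ge\al\) from \(\beta_n\va\gamma_n>\al-\frac1n\).

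\textbf{Main obstacle: moving the inequality to the limit.} Only right continuity of \(\va\) is available, so the argument must use sequences approaching from above. Let \(\beta'_n=\sup_{k\ge n}\beta_k\) and \(\gamma'_n=\sup_{k\ge n}\gamma_k\). These decrease to \(\beta\) and \(\gamma\), respectively, and \(\beta'_n\va\gamma'_n\ge\beta_n\va\gamma_n>\al-\frac1n\).

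By the right continuity of \(\va\) (the sequence version given in the preliminaries),
\[\beta\va\gamma=\lim_n \beta'_n\va\gamma'_n\ge \al.\]
Therefore \(f(a)\va g(b)\ge\beta\va\gamma\ge\al\), which completes the proof.
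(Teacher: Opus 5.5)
Your proof is correct. It shares the paper's compactness skeleton: approximate \(x\) by points \(x_n\) of the cut, realize each by a pair \((a_n,b_n)\) with \(f(a_n)\va g(b_n)\) nearly \(\al\), extract a limit \((a,b)\), and get \(a\ast b=x\) from continuity of \(\ast\). The difference is in how you pass to the limit.

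The paper uses a strictly monotone approximating sequence, which is why it needs a separate singleton case. It then extracts a subsequence along which \(a_{n_i}\), \(b_{n_i}\), \(f(a_{n_i})\), \(g(b_{n_i})\) are all monotone. Finally it splits into three cases according to the directions of \(f(a_{n_i})\) and \(g(b_{n_i})\), invoking Lemma \ref{1} in each.

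You instead read off \(f(a)\ge\lim f(a_n)\) and \(g(b)\ge\lim g(b_n)\) directly from the closed cuts, with no monotonicity of \(a_n,b_n\) needed. You then collapse the three cases into one via the decreasing envelopes \(\sup_{k\ge n}f(a_k)\) and \(\sup_{k\ge n}g(b_k)\). These dominate the original values and need only a single application of right continuity of \(\va\).

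Your route buys two things. It is shorter and uniform: a constant sequence \(x_n=x\) is allowed, so no singleton case arises. It also uses only the inequality that upper semicontinuity actually gives. By contrast, the paper's Cases 2 and 3 cite the equality \(\inf f(a_i)=f(a_0)\) from the second item of Lemma \ref{1}, which fails for non-strictly monotone sequences. For example, take \(f=\frac12\) on \([0,\frac12)\), \(f(\frac12)=1\), \(f=0\) on \((\frac12,1]\), and \(a_i\uparrow\frac12\). In those cases the paper really only uses \(\inf f(a_i)\le f(a_0)\), so its argument survives, but yours never depends on the overstated lemma.

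One bookkeeping detail in your write-up: after reindexing, the bound becomes \(f(a_{n_i})\va g(b_{n_i})>\al-\frac{1}{n_i}\ge\al-\frac1i\). This leaves your limit argument intact.
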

\begin{proof}
Suppose  that  \(x\in (f\ast_\va g)^\al\). If \((f\ast_\va g)^\al\) is a singelton, then \((f\ast_\va g)^\al=\{x\}\). Since \(f,g\in \mathbf{L_u}\), then there exists \(a,b\in [0,1]\) such that  \(f(a)=g(b)=1\). By (\ref{2.1}),  \[(f\ast_\va g)(a\ast b)\ge f(a)\va g(b)= 1\ge \alpha,\]  i.e.  \[a\ast b\in (f\ast_{\va} g)^\al,\] hence, \(a\ast b=x\). Besides, \(f(a)\va g(b)=1\ge \al\). 
 
If \((f\ast_\va g)^{\al}\)  is not a singleton,   we shall show that  there exists \(a, b\in[0,1]\) such that \(a\ast b=x\) and \(f(a)\va g(b)\ge \al\) by 3 steps.



Step 1,  determine  the numbers  \(a, b\in[0,1]\) as  follows. Since the convex set \((f\ast_\va g)^{\al}\)  is not a singleton and \(x\in \overline{(f\ast_{\va} g)^\al}\),   there exists a strictly monotone sequence \(\{x_n\}_{ n\in \mathbb{N}^+}\) in \((f\ast_\va g)^{\al}\) such that \[\lim_{n\rightarrow \infty}x_n=x.\]   Suppose that \(\{x_n\}_{ n\in \mathbb{N}^+}\) is strictly decreasing ( if \(\{x_n\}_{ n\in \mathbb{N}^+}\) is strictly increasing, the proof is  similar), then \[\inf x_n=x.\] For each \(n\in \mathbb{N}^+\), since  \(x_n\in (f\ast_\va g)^{\al}\), i.e. \[(f\ast_\va g)(x_n)\ge \al,\] then there exists \((a_n, b_n)\in [0,1]^2\) such that \begin{equation}\label{q1}a_n\ast b_n= x_n\end{equation} and \begin{equation}\label{q2} f(a_n)\va g(b_n)\ge \al-\frac{1}{n}.\end{equation} Since \(\{x_n\}_{ n\in \mathbb{N}^+}\) is strictly decreasing,   (\ref{q1}) implies that \((a_n, b_n)\ne(a_m,b_m)\) whenever \(n\ne m,\) thus the set \(\{(a_n,b_n)\mid n\in \mathbb{N}^+\}\) is infinite. By compactness, there exists an accumulation point \((a, b)\) of  set \(\{(a_n,b_n)\mid n\in \mathbb{N}^+\}\).

Step 2,  prove  \(a\ast b =x\).   Since for each \(n\in \mathbb{N}^+\), \[a_n\ast b_n=x_n> x,\] then \[\{(a_n,b_n)\mid n\in \mathbb{N}^+\}\subseteq \{(x,y)\mid x\ast y\ge x\}.\]  Hence,  the accumulation point \[(a,b)\in \{(x,y)\mid x\ast y\ge x\} ,\] thus, \(a\ast b \ge x\). If \(a\ast b \ne x\), let  \(x'\in(x, a\ast b)\). Since \(\{x_n\}_{ n\in \mathbb{N}^+}\) is strictly decreasing and  \[\lim_{n\rightarrow \infty}x_n=x<x',\]  then there exists \(N\), for any \(n> N\),   \(x_n<x'\),  then  \(a_n\ast b_n=x_n< x'\) from  (\ref{q1}), thus  \[(a_n,b_n)\in \{(x,y)\mid x\ast y\le x'\}.\] Hence, the accumulation point\[(a, b)\in \{(x,y)\mid x\ast y\le x'\},\]
 and thus,  \(a\ast b\le x'\), which is contradict to  \(x'\in(x, a\ast b)\). Therefore,  \(a\ast b =x.\)   
 
 Trivially, there exists a subsequence \(\{(a_{n_i},b_{n_i})\}_{i\in \mathbb{N}^+}\) of \(\{(a_n,b_n)\}_{n\in \mathbb{N}^+}\) satisfying all the following conditions 
\begin{itemize}
\item both \(\{a_{n_i}\}_{i\in \mathbb{N}^+}\) and \(\{b_{n_i}\}_{i\in \mathbb{N}^+}\) are monotone and converges to \(a\) and \(b\), respectively.  
\item both \(\{f(a_{n_i})\}_{i\in \mathbb{N}^+}\) and \(\{g(b_{n_i})\}_{i\in \mathbb{N}^+}\) are monotone.
\end{itemize}

 Step 3, prove that \(f(a)\va g(b)\ge \al\) case by case.
 
 
 Case 1,  both \(\{f(a_{n_i})\}_{i\in \mathbb{N}^+}\) and \(\{g(b_{n_i})\}_{i\in \mathbb{N}^+}\)  are monotone increasing. By lemma \ref{1}, \(\sup f(a_{n_i})\le f(a)\) and  \(\sup g(b_{n_i})\le g(b)\). Hence,  \[f(a)\va g(b)\ge \sup f(a_{n_i}) \va \sup g(b_{n_i})\ge \sup f(a_{n_i}) \va  g(b_{n_i}).\] This, together with (\ref{q2}), implies  \[f(a)\va g(b)\ge \al.\]

Case 2,  both \(\{f(a_{n_i})\}_{i\in \mathbb{N}^+}\) and \(\{g(b_{n_i})\}_{i\in \mathbb{N}^+}\) are monotone  decreasing. By Lemma \(\ref{1}\),  \[\inf f(a_{n_i})=f(a)\quad \inf g(b_{n_i})=g(b).\] Since \(\va\) is right continuous,  then \[\lim_{i\ra\infty}f(a_{n_i})\va g(b_{n_i})=f(a)\va g(b).\] On the other hand, by (\ref{q2}), \[ \lim_{i\ra\infty}f(a_{n_i})\va g(b_{n_i})\ge \al.\] Hence, \[f(a)\va g(b)\ge \al.\]

Case 3,   without loss of generality, suppose that  \(\{f(a_{n_i})\}_{i\in \mathbb{N}^+}\) is monotone increasing and \(\{g(b_{n_i})\}_{i\in \mathbb{N}^+}\) is monotone decreasing. 
Firstly,  we show that for each \(k\in \mathbb{N}^+\), \begin{equation}\label{3}(\sup f(a_{n_i})) \va g(b_{n_k})\ge \al.\end{equation} In fact, for any \(j\in \mathbb{N}^+\)  satisfying \(j>k\), since \(\{g(b_{n_i})\}_{i\in \mathbb{N}^+}\) is monotone decreasing,  \(g(b_{n_j})\le g(b_{n_k})\), then \[ (\sup f(a_{n_i}))\va g(b_{n_k})\ge  (\sup f(a_{n_i}))\va g(b_{n_j})\ge  f(a_{n_j})\va g(b_{n_j}).\]  This, together with  (\ref{q2}), implies that (\ref{3}) holds.
Next,  by Lemma \ref{1}, \[f(a)\ge \sup f(a_{n_i}), \quad g(b)=\inf g(b_{n_k}).\] 
thus, \[f(a)\va  g(b)\ge (\sup f(a_{n_i}))\va (\inf g(b_{n_k})).\]
Since \(\va\) is right continuous, \[ (\sup f(a_{n_i}))\va (\inf g(b_{n_k}))=\inf ((\sup f(a_{n_i}))\va g(b_{n_k})).\] By (\ref{3}), 
\[\inf ((\sup f(a_{n_i}))\va g(b_{n_k}))\ge \al.\] Therefore, \(f(a)\va  g(b)\ge \alpha\).
\end{proof}

\begin{prop}\label{2.10}
If   \(\ast\) is a continuous t-norm and  \(\va\) is a right continuous t-norm, then for any  \(f,g\in \mathbf{L_u}\),   \(f\ast_\va g\) is upper semicontionuous.
\end{prop}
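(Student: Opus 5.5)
The plan is to use Lemma \ref{2.9} directly, since it was built for this purpose. By the definition of upper semicontinuity, we must show that for each \(\al\in[0,1]\) the cut \((f\ast_\va g)^\al\) is closed, that is, \(\overline{(f\ast_\va g)^\al}\subseteq (f\ast_\va g)^\al\).

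Fix \(\al\in[0,1]\) and take any \(x\in\overline{(f\ast_\va g)^\al}\). The case \(\al=0\) is trivial, because \((f\ast_\va g)^0=[0,1]\). Note also that the cut is nonempty: \(f\ast_\va g\) is normal, and in fact attains the value \(1\) at \(a_0\ast b_0\) by Proposition \ref{2.8}, so \(x\) makes sense.

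By Lemma \ref{2.9}, there is \((a,b)\in[0,1]^2\) with \(a\ast b=x\) and \(f(a)\va g(b)\ge\al\). Inequality (\ref{2.1}) then gives
\[(f\ast_\va g)(x)=(f\ast_\va g)(a\ast b)\ge f(a)\va g(b)\ge \al,\]
so \(x\in(f\ast_\va g)^\al\). Hence every cut is closed and \(f\ast_\va g\) is upper semicontinuous.

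There is essentially no obstacle left at this stage, since all the analytic work sits in Lemma \ref{2.9}. That lemma produces the limit pair via compactness and uses the right continuity of \(\va\) together with Lemma \ref{1}. The only points to watch are bookkeeping. First, the lemma is stated for points of the closure, which is exactly what we need. Second, the case \(\al=0\), or an empty cut, is handled separately as above, so the argument never relies on the singleton case of the lemma in a degenerate situation.
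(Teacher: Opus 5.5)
Your proof is correct and follows the paper's argument. You take any point of the closure of the cut, invoke Lemma \ref{2.9} to write it as \(a\ast b\) with \(f(a)\va g(b)\ge\alpha\), and conclude via (\ref{2.1}); the only difference is cosmetic (the paper sets aside a singleton cut, you set aside \(\alpha=0\)), and neither split is needed since Lemma \ref{2.9} covers every point of the closure.
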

\begin{proof}
For any \(\alpha \in [0,1]\),  if  \((f\ast_\va g)^{\al}\) is a singleton, then it is closed.
If  \((f\ast_\va g)^{\al}\) is not a singleton, By Lemma \ref{2.9}, for any \(x\in \overline{(f\ast_{\va} g)^\al}\),  there exists \((a, b)\) such that \(a\ast b=x\) and \(f(a)\va g(b)\ge \al\). By (\ref{2.1}), \[(f\ast_{\va} g)(x)=(f\ast_{\va} g)(a\ast b)\ge  f(a)\va  g(b)\ge \al,\]  i.e.  \(x\in(f\ast_{\va} g)^\al.\) Hence, \((f\ast_{\va} g)^\al\) is closed,  we complete the proof. 
\end{proof}

\begin{prop}\label{3.5}
If   \(\ast\) is a continuous t-norm and  \(\va\) is a right continuous t-norm, then convolution \(\ast_\va\) is  a binary function  on \( \mathbf{L_u}\).
\end{prop}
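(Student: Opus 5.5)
The plan is to check directly that \(f\ast_\va g\) lies in \(\mathbf{L_u}\) for arbitrary \(f,g\in\mathbf{L_u}\). By definition, \(\mathbf{L_u}\) consists of the fuzzy truth values that are normal, convex and upper semicontinuous. So closure of \(\ast_\va\) on \(\mathbf{L_u}\) amounts to checking these three properties for \(f\ast_\va g\), and all three have already been established under the present hypotheses.

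Fix \(f,g\in\mathbf{L_u}\). By Definition \ref{df}, \(f\ast_\va g\) is a well-defined map \([0,1]\to[0,1]\), i.e. an element of \(\mathbf{M}\), since a supremum of values \(f(a)\va g(b)\in[0,1]\) (or \(\bigvee\varnothing=0\)) lies in \([0,1]\). Since \(\ast\) is a continuous t-norm and \(\va\) is a right continuous t-norm, Proposition \ref{2.8} shows that \(f\ast_\va g\) is normal and convex. Under the same hypotheses, Proposition \ref{2.10} shows that \(f\ast_\va g\) is upper semicontinuous. Combining the two, \(f\ast_\va g\in\mathbf{L_u}\), so \(\ast_\va\) restricts to a binary operation \(\mathbf{L_u}^2\to\mathbf{L_u}\).

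There is no real obstacle left at this point, because the substantive work was done in Lemma \ref{2.9}. That lemma handles closedness of the \(\al\)-cuts via compactness and the right continuity of \(\va\). The only point to verify is that Propositions \ref{2.8} and \ref{2.10} are applied with exactly the stated hypotheses, which they are.
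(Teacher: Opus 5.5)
Your proposal is correct and matches the paper's argument exactly. The paper proves Proposition \ref{3.5} in one line by combining Proposition \ref{2.8} (normality and convexity) with Proposition \ref{2.10} (upper semicontinuity, via Lemma \ref{2.9}), just as you do.
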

\begin{proof}
It is result of   Proposition \ref{2.8} and Proposition \ref{2.10}.
\end{proof}

\begin{lem}\label{2.11}
If   \(\ast\) is a continuous t-norm and  \(\va\) is a right continuous t-norm, then for any \(f,g\in \mathbf{L_u}, x\in[0, 1]\),  there exists \(a,b\in[0,1]\) such that \(a\ast b=x\) and \((f\ast_{\va}g)(x) =f(a)\va f(b)\).
\end{lem}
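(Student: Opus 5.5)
The plan is to reduce the statement to Lemma \ref{2.9} by taking the level \(\alpha\) to be exactly the value \((f\ast_\va g)(x)\). (I read the conclusion as \((f\ast_\va g)(x)=f(a)\va g(b)\); the second \(f\) in the statement should be \(g\).)

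\textbf{Choice of level.} Fix \(f,g\in\mathbf{L_u}\) and \(x\in[0,1]\), and put
\[\alpha=(f\ast_\va g)(x)=\bigvee_{a\ast b=x} f(a)\va g(b).\]
Then \(x\in(f\ast_\va g)^{\alpha}\) directly from the definition of the \(\alpha\)-cut. Hence \(x\in\overline{(f\ast_\va g)^{\alpha}}\); by Proposition \ref{2.10} this closure is just the cut itself, though we do not need that.

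\textbf{Applying Lemma \ref{2.9}.} Since \(\ast\) is a continuous t-norm and \(\va\) is a right continuous t-norm, Lemma \ref{2.9} applies at level \(\alpha\). It yields \((a,b)\in[0,1]^2\) with \(a\ast b=x\) and \(f(a)\va g(b)\ge\alpha\).

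\textbf{Reverse inequality.} Because \(a\ast b=x\), the pair \((a,b)\) is one of the pairs indexing the supremum defining \((f\ast_\va g)(x)\). Therefore
\[f(a)\va g(b)\le (f\ast_\va g)(x)=\alpha.\]
Combining the two inequalities gives \(f(a)\va g(b)=(f\ast_\va g)(x)\), so the supremum is attained.

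\textbf{Degenerate case.} If \(\alpha=0\), the cut \((f\ast_\va g)^0\) is all of \([0,1]\), so Lemma \ref{2.9} still applies. Alternatively, take \(a=x\), \(b=1\), so that \(a\ast b=x\) by (T4), and note that \(0\le f(a)\va g(b)\le\alpha=0\). In particular the index set \(\{(a,b)\mid a\ast b=x\}\) is never empty, so the value is never the artificial \(\bigvee\varnothing\).

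\textbf{Where the difficulty lies.} All the real work, namely the compactness and accumulation-point argument and the use of right continuity of \(\va\), is already inside Lemma \ref{2.9}. What remains is to check that its hypotheses hold at the level \(\alpha=(f\ast_\va g)(x)\) and that \(x\) lies in the required closure; this is immediate, since \(x\) lies in the cut itself.
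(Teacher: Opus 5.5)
Your proposal is correct and is essentially the paper's proof. You set \(\alpha=(f\ast_\va g)(x)\), invoke Lemma \ref{2.9} to obtain \((a,b)\) with \(a\ast b=x\) and \(f(a)\va g(b)\ge\alpha\), and get the reverse inequality from (\ref{2.1}); your remark that Proposition \ref{2.10} is superfluous here (since \(x\) already lies in the cut, hence in its closure) is a valid minor streamlining, and reading the conclusion as \(f(a)\va g(b)\) correctly fixes the typo.
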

\begin{proof}
 For any \(f,g\in \mathbf{L_u}, x\in[0, 1]\), let \(\al=(f\ast_\va g)(x)\), then \(x\in (f\ast_\va g)^\al.\)   By Proposition \ref{2.10},  \(f\ast_\va g\) is upper semicontinuous, thus  \((f\ast_\va g)^\al=\overline{(f\ast_\va g)^\al}\). Since \(x\in \overline{(f\ast_\va g)^\al},\)  by Lemma \ref{2.9}, there exists \((a,b)\in [0,1]^2\) such that   \(a\ast b=x\) and \(f(a)\va g(b)\ge \al\). On the other hand, by (\ref{2.1}), \[f(a)\va g(b)\le (f\ast_\va g)(a\ast b)=(f\ast_\va g)(x)=\al.\] Therefore, \(f(a)\va g(b)=\al=(f\ast_\va g)(x)\).
\end{proof}

\begin{cor} If  \(\ast\) is a continuous t-norm and  \(\va\) is a right continuous t-norm, then for any \(f,g\in \mathbf{L_u}\) and \(x\in[0, 1]\),   \((f\ast_\va g)(x)=\max\{f(a)\va g(b)\mid  a\ast b=x\}\). \end{cor}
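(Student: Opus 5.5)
The corollary should follow almost immediately from Lemma \ref{2.11} combined with inequality (\ref{2.1}). By Definition \ref{de}, $(f\ast_\va g)(x)$ is the supremum of the set
\[S_x=\{f(a)\va g(b)\mid a\ast b=x\}.\]
To replace this supremum by a maximum, I need two facts: every element of $S_x$ lies below $(f\ast_\va g)(x)$, and some element of $S_x$ actually equals it.

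First I check that $S_x$ is nonempty, so that the maximum is meaningful. Since $\ast$ is a t-norm, $x\ast 1=x$, so the pair $(x,1)$ belongs to the index set.

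The upper bound is exactly (\ref{2.1}): for any $(a,b)$ with $a\ast b=x$ we have
\[f(a)\va g(b)\le (f\ast_\va g)(a\ast b)=(f\ast_\va g)(x).\]

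For attainment I invoke Lemma \ref{2.11}. It provides $a,b$ with $a\ast b=x$ and $(f\ast_\va g)(x)=f(a)\va g(b)$. The statement of that lemma has a typo, $f(b)$ in place of $g(b)$, but its proof clearly yields $f(a)\va g(b)$. So this element of $S_x$ equals the supremum, the supremum is attained, and $(f\ast_\va g)(x)=\max S_x$.

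There is essentially no obstacle here. All the analytic work, namely the compactness argument and the use of right continuity of $\va$ together with the upper semicontinuity of $f$ and $g$, was already carried out in Lemma \ref{2.9} and Proposition \ref{2.10}, on which Lemma \ref{2.11} rests.
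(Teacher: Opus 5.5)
Your proposal is correct and takes the same approach as the paper, which derives the corollary directly from Lemma~\ref{2.11}. Your use of (\ref{2.1}) for the upper bound, the nonemptiness check via $x\ast 1=x$, and your note that $f(b)$ in Lemma~\ref{2.11} should read $g(b)$ are all accurate; they only spell out details the paper leaves implicit.
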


\begin{proof}
It follows from Lemma \ref{2.11} immediately.
\end{proof}

\begin{prop}\label{2.13}
If   \(\ast\) is a continuous t-norm and  \(\va\) is a right continuous t-norm,  then for any \(f,g,h\in \mathbf{L_u}\) and \(x\in[0,1]\), 

\[((f\ast_\va g)\ast_\va h)(x)=\bigvee_{x_1\ast x_2\ast x_3=x} f(x_1)\va g(x_2)\va h(x_3)=(f\ast_\va (g\ast_\va h))(x).\]
Hence, \(\ast_\va\) is associative on \(\mathbf{L_u}\).
\end{prop}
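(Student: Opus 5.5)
We prove the left equality; the right one follows symmetrically, since the triple supremum is invariant under reassociating (\(\ast\) and \(\va\) are associative t-norms). Associativity of \(\ast_\va\) on \(\mathbf{L_u}\) then follows. By Proposition \ref{3.5}, \(f\ast_\va g\in\mathbf{L_u}\), so both sides are well defined.

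Unfold the definition to get
\[((f\ast_\va g)\ast_\va h)(x)=\bigvee_{y\ast x_3=x}(f\ast_\va g)(y)\va h(x_3).\]
For the inequality ``\(\ge\)'', fix \(x_1,x_2,x_3\) with \(x_1\ast x_2\ast x_3=x\) and put \(y=x_1\ast x_2\). By (\ref{2.1}), \((f\ast_\va g)(y)\ge f(x_1)\va g(x_2)\). Since \(\va\) is monotone, the term \((f\ast_\va g)(y)\va h(x_3)\) dominates \(f(x_1)\va g(x_2)\va h(x_3)\). Taking suprema gives the inequality.

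For ``\(\le\)'', fix \(y,x_3\) with \(y\ast x_3=x\). The corollary to Lemma \ref{2.11} gives \(a,b\) with \(a\ast b=y\) and \((f\ast_\va g)(y)=f(a)\va g(b)\). Hence
\[(f\ast_\va g)(y)\va h(x_3)=f(a)\va g(b)\va h(x_3),\]
and here \(a\ast b\ast x_3=x\). So each term on the left is one of the terms of the triple supremum. This is the key step. Without attainment we would have to distribute \(\va\) over an arbitrary supremum, i.e.\ use \((\bigvee_i s_i)\va t=\bigvee_i(s_i\va t)\). That identity needs left continuity of \(\va\), which we do not have, since \(\va\) is only right continuous. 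Lemma \ref{2.11}, which relies on upper semicontinuity and right continuity, removes this obstacle.

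The other equality is handled the same way. Write
\[(f\ast_\va(g\ast_\va h))(x)=\bigvee_{x_1\ast z=x} f(x_1)\va(g\ast_\va h)(z).\]
Use (\ref{2.1}) for ``\(\ge\)''. For ``\(\le\)'', apply Lemma \ref{2.11} to \(g\ast_\va h\) to get \(x_2\ast x_3=z\) attaining \((g\ast_\va h)(z)\). Then rewrite with associativity of \(\va\) and \(\ast\).
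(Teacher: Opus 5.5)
Your proof is correct and follows essentially the paper's route: the \(\ge\) direction comes from (\ref{2.1}) together with monotonicity of \(\va\), and the \(\le\) direction from the attainment result Lemma \ref{2.11}, which avoids distributing \(\va\) over a supremum. The only difference is that the paper also applies Lemma \ref{2.11} to the outer convolution to pick a maximizing pair, which uses \(f\ast_\va g\in\mathbf{L_u}\); you instead bound each outer term directly and so need attainment only for the inner convolution, a slight economy.
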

\begin{proof}
 We only show that \(((f\ast_\va g)\ast_\va h)(x)=\bigvee_{x_1\ast x_2\ast x_3=x} f(x_1)\va g(x_2)\va h(x_3)\),  and another equality is similar.
On the one hand, by Lemma \ref{2.11}, there exist  \(a,b\in[0,1]\), such that \(a\ast b=x\) and \(((f\ast_\va g)\ast_\va h)(x)=(f\ast_\va g)(a)\va h(b)\). There also exist \(a_1,a_2\in [0,1]\)  such that \(a_1\ast a_2=a\) and \((f\ast_\va g)(a)=f(a_1)\va g(a_2)\). Hence, \[((f\ast_\va g)\ast_\va h)(x)=f(a_1)\va g(a_2)\va h(b).\] Since \(a_1\ast a_2\ast b=x\), \[((f\ast_\va g)\ast_\va h)(x)\le \bigvee_{x_1\ast x_2\ast x_3=x} f(x_1)\va g(x_2)\va h(x_3).\]

On the other hand,\[((f\ast_\va g)\ast_\va h)(x)=\bigvee_{a\ast b=x} (f\ast_\va g)(a)\va h(b)=\bigvee_{a\ast b=x}( \bigvee_{a_1\ast a_2=a} f(a_1)\va g(a_2) ) \va h(b)\ge\bigvee_{a_1\ast a_2\ast b=x} f(a_1)\va g(a_2)\va h(b).\] Therefore,  \[((f\ast_\va g)\ast_\va h)(x)=\bigvee_{x_1\ast x_2\ast x_3=x} f(x_1)\va g(x_2)\va h(x_3).\] 
\end{proof}

A non-empty  convex subset of  \([0,1]\) is called an \emph{subinterval} of \([0,1]\). Let \(\mathcal{I}\) denote the set of all subinterval of \([0,1]\).  
Define the partial order  \(\p\) on \(\mathcal{I}\)  as follows, 
   \[A\p B,\text { if } A\wedge B=A,\]  
 where \(A\wedge B=\{x\wedge y\mid x\in A, y\in B\}\).

\begin{prop}\cite{su25-11}\label{17}
For all \(f,g\in \mathbf{L}\), 
 the following statements are equivalent.
\begin{itemize}
\item[(i)] \(f \sqsubseteq g.\)
\item[(ii)]  \(f^{\hat{a}}\p g^{\hat{a}}\)  for all  \(a\in(0,1)\).
\item[(iii)] \(f^{a}\p g^{a}\)  for all  \(a\in(0,1)\).
\end{itemize}
\end{prop}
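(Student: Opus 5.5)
The approach is to translate the convolution order into a statement about strong cuts, using Proposition \ref{.14} with \(\ast=\va=\wedge\), and then to pass between strong cuts and ordinary cuts by monotone limits.

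\textbf{Step 1: (i)\(\Leftrightarrow\)(ii).} Since \(\wedge\) is increasing in each place, Proposition \ref{.14} gives
\[(f\wedge_\wedge g)^{\hat\alpha}=\bigcup_{\alpha_1\wedge\alpha_2>\alpha} f^{\alpha_1}\wedge g^{\alpha_2}.\]
The condition \(\alpha_1\wedge\alpha_2>\alpha\) means \(\alpha_1,\alpha_2>\alpha\), and the cuts are nested, so the union collapses to \(f^{\hat\alpha}\wedge g^{\hat\alpha}\). Indeed, for \(x\in f^{\alpha_1}\) and \(y\in g^{\alpha_2}\) both points lie in \(f^{\alpha_1\wedge\alpha_2}\times g^{\alpha_1\wedge\alpha_2}\). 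A fuzzy truth value is determined by its strong cuts, since \(h(x)=\sup\{\alpha\mid x\in h^{\hat\alpha}\}\). Hence \(f\sq g\) holds iff \(f^{\hat\alpha}\wedge g^{\hat\alpha}=f^{\hat\alpha}\) for every \(\alpha\in[0,1]\).

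It remains to see that the values \(\alpha\in(0,1)\) suffice. For \(\alpha=1\) both strong cuts are empty, so the condition is trivial. For \(\alpha=0\) we have \(f^{\hat 0}=\bigcup_{\alpha>0}f^{\hat\alpha}\), and likewise for \(g\); these are increasing unions. Forming \(A\wedge B\) commutes with increasing unions, so the case \(\alpha=0\) follows from the cases \(\alpha\in(0,1)\). For \(\alpha<1\) the strong cuts are nonempty intervals by normality and Proposition \ref{10}, so \(\p\) is applied within \(\mathcal I\).

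\textbf{Step 2: an endpoint description of \(\p\).} Here I would prove a lemma characterizing \(A\p B\) for \(A,B\in\mathcal I\).
\begin{itemize}
\item \(A\wedge B\subseteq A\) holds iff every \(y\in B\) with \(y<\sup A\) lies in \(A\) or satisfies \(y\ge\) every point of \(A\). Roughly, this says \(\inf B\ge\inf A\), with the appropriate endpoint-inclusion clause.
\item \(A\subseteq A\wedge B\) holds iff every \(x\in A\) is \(\le\) some \(y\in B\) or lies in \(B\). Roughly, this says \(\sup A\le\sup B\), again with an endpoint clause.
\end{itemize}
Thus \(A\p B\) amounts to "\(A\) lies to the left of \(B\)" in the endpoint sense.

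\textbf{Step 3: (ii)\(\Leftrightarrow\)(iii).} For \(a\in(0,1)\) we have \(f^{\hat a}=\bigcup_{\beta>a}f^{\beta}\), an increasing union, and \(f^{a}=\bigcap_{\beta<a}f^{\hat\beta}\), a decreasing intersection. Both index ranges may be restricted to \((0,1)\), and each of these intervals is nonempty by normality when the index is below \(1\).
\begin{itemize}
\item For (iii)\(\Rightarrow\)(ii), I expect the relation \(\p\) to be preserved by increasing unions of intervals, because \(\wedge\) commutes with such unions.
\item For (ii)\(\Rightarrow\)(iii), I would use the lemma of Step 2 and pass to limits of endpoints. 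The infima and suprema of nested intervals converge monotonically, so the inequalities between endpoints survive. What must be checked is the inclusion or exclusion of endpoints in the limit.
\end{itemize}

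\textbf{Main obstacle.} I expect the hardest part to be the bookkeeping of open versus closed endpoints in Step 3, which is needed because \(\p\) is not obviously stable under decreasing intersections. If this becomes messy, my fallback is to avoid \(\p\) on limits altogether. Instead I would prove directly that \(f^a\wedge g^a=f^a\), checking the two inclusions pointwise and using that \(x\in f^a\) iff \(x\in f^{\hat\beta}\) for all \(\beta<a\).
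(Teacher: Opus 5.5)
Your overall route is sound and uses the same tools the paper imports from its source. Step~1 is complete and correct: Proposition~\ref{.14} with \(\ast=\va=\wedge\) does collapse to \((f\wedge_\wedge g)^{\hat\alpha}=f^{\hat\alpha}\wedge g^{\hat\alpha}\), strong cuts determine the function, and you handle the levels \(0\) and \(1\) correctly. Your pointwise descriptions of the two inclusions in Step~2 are exact. (iii)\(\Rightarrow\)(ii) is also done, because \(\wedge\) commutes with increasing unions and \(f^{\hat a}=\bigcup_{a<\beta<1}f^{\beta}\).

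\textbf{The gap.} The one non-routine implication, (ii)\(\Rightarrow\)(iii), is never proved. You only say you \emph{expect} the endpoint bookkeeping to work. Your fallback, which checks the inclusions using only \(x\in f^a\iff x\in f^{\hat\beta}\) for all \(\beta<a\), lacks the ingredient that makes it go through.
\begin{itemize}
\item The inclusion \(f^a\wedge g^a\subseteq f^a\) is formal. For \(x\in f^a\), \(y\in g^a\) and \(\beta<a\) you get \(x\wedge y\in f^{\hat\beta}\wedge g^{\hat\beta}=f^{\hat\beta}\), hence \(x\wedge y\in\bigcap_{\beta<a}f^{\hat\beta}=f^a\).
\item For \(f^a\subseteq f^a\wedge g^a\), hypothesis (ii) only gives, for each \(\beta<a\), some \(y_\beta\in g^{\hat\beta}\) with \(y_\beta\ge x\). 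You need a single \(y\in g^a\) with \(y\ge x\). Elements of \(\mathbf{L}\) need not be upper semicontinuous, so no limit of the \(y_\beta\) will produce it.
\end{itemize}

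\textbf{How to close it.} The missing idea is normality plus convexity of \(g\). Choose \(z\in g^a\), which exists because \(a<1\). If \(z\ge x\), take \(y=z\). If \(z<x\), then \(z<x\le y_\beta\), and convexity gives \(g(x)\ge g(z)\wedge g(y_\beta)>\beta\) for every \(\beta<a\). Hence \(g(x)\ge a\), and \(y=x\) works.

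Convexity is indispensable here. For a non-convex normal \(g\), the identity \(f^{\hat\beta}\wedge g^{\hat\beta}=f^{\hat\beta}\) can hold at every level while \(f^a\wedge g^a\ne f^a\).

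Your endpoint plan also goes through. Suppose \(\sup f^a=\sup g^a=c\in f^a\), and fix \(\beta<a\).
\begin{itemize}
\item If \(\sup g^{\hat\beta}>c\), then \(c\in g^{\hat\beta}\), because that interval also meets \(g^a\subseteq[0,c]\).
\item Otherwise \(\sup f^{\hat\beta}=\sup g^{\hat\beta}=c\in f^{\hat\beta}\), and the endpoint clause of \(f^{\hat\beta}\p g^{\hat\beta}\) gives \(c\in g^{\hat\beta}\).
\end{itemize}
Either way \(c\in\bigcap_{\beta<a}g^{\hat\beta}=g^a\). The left endpoint is symmetric.

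Shortest of all, (ii)\(\Rightarrow\)(iii) is exactly the intersection half of Lemma~\ref{15}, applied to \(A_\beta=f^{\hat\beta}\), \(B_\beta=g^{\hat\beta}\) for \(0<\beta<a\). Their intersections \(f^a\) and \(g^a\) are nonempty. So the stability of \(\p\) under such intersections, which you doubted, does hold.
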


\begin{lem}\cite{su25-11} \label{18} If   \(\ast\)  is a continuous t-norm, then for \(A, B,C\in \mathcal{I}\),  \(A\p B\) implies  \(A\ast C\p B\ast C\).
\end{lem}

\begin{lem}\cite{su25-11}\label{15}
Suppose that  \(\{A_i\}_{i\in \Lambda}\) and   \(\{B_i\}_{i\in \Lambda}\)  are two families of  subintervals of \([0,1]\), and  \(A_i\p B_i\) for each \(i\in \Lambda\).  If both \(\bigcup_{i\in \Lambda}{A_i}\) and \(\bigcup_{i\in \Lambda}{B_i}\) are convex, then \[\bigcup_{i\in \Lambda} A_i \p  \bigcup_{i\in \Lambda} B_i\]
 If  both \(\bigcap_{i\in \Lambda}A_i\) and  \(\bigcap_{i\in \Lambda}B_i\) are non-empty, then  \[\bigcap_{i\in \Lambda}A_i\p  \bigcap_{i\in \Lambda} B_i.\]
 \end{lem}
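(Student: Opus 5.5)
The plan is to replace the set-valued condition \(A\wedge B=A\) by a pointwise characterization of \(\p\) on \(\mathcal{I}\). Both parts of the lemma can then be checked element by element, using only convexity and the non-emptiness assumptions.

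\emph{Step 1 (characterization).} For \(A,B\in\mathcal{I}\) we show that \(A\p B\) holds if and only if the following two conditions hold:
\begin{itemize}
\item[(a)] for every \(x\in A\) there is \(y\in B\) with \(y\ge x\);
\item[(b)] whenever \(y\in B\), \(a\in A\) and \(y<a\), we have \(y\in A\).
\end{itemize}
The inclusion \(A\wedge B\subseteq A\) is equivalent to (b). Indeed, \(a\wedge y\) is either \(a\in A\) or \(y\) with \(y<a\).

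The inclusion \(A\subseteq A\wedge B\) is equivalent to (a). If \(x=a\wedge y\), then either \(y=x\), so \(x\in B\), or \(a=x\le y\); in both cases some \(y\in B\) satisfies \(y\ge x\). Conversely, given (a), we have \(x=x\wedge y\in A\wedge B\).

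\emph{Step 2 (unions).} Put \(A=\bigcup A_i\) and \(B=\bigcup B_i\); by hypothesis both are intervals.
\begin{itemize}
\item[(a)] If \(x\in A_j\), then (a) for \(A_j\p B_j\) gives some \(y\in B_j\subseteq B\) with \(y\ge x\).
\item[(b)] Let \(y\in B_j\), \(a\in A_k\) and \(y<a\). Pick any \(a'\in A_j\), which is possible since \(A_j\neq\varnothing\).
\begin{itemize}
\item[\(\bullet\)] If \(y<a'\), then (b) for \(A_j\p B_j\) gives \(y\in A_j\subseteq A\).
\item[\(\bullet\)] Otherwise \(a'\le y<a\) with \(a',a\in A\). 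Convexity of \(A\) then gives \(y\in A\).
\end{itemize}
\end{itemize}
This mixing of different indices is the only delicate point. It is exactly where convexity of \(\bigcup A_i\) is used.

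\emph{Step 3 (intersections).} Put \(A=\bigcap A_i\) and \(B=\bigcap B_i\); both are non-empty intervals.
\begin{itemize}
\item[(b)] Let \(y\in B\), \(a\in A\) and \(y<a\). For each \(i\) we have \(y\in B_i\) and \(a\in A_i\), so (b) for \(A_i\p B_i\) gives \(y\in A_i\). Hence \(y\in A\).
\item[(a)] This is the main obstacle, since (a) gives a witness \(y_i\in B_i\) with \(y_i\ge x\) that depends on \(i\).
\end{itemize}
For (a), let \(x\in A\) and fix \(b\in B\), using \(B\neq\varnothing\).
\begin{itemize}
\item[\(\bullet\)] If \(b\ge x\), then \(b\) itself is a common witness.
\item[\(\bullet\)] If \(b<x\), then for each \(i\) we have \(b,y_i\in B_i\) with \(b<x\le y_i\). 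Convexity of \(B_i\) gives \(x\in B_i\), so \(x\in B\) serves as the witness.
\end{itemize}
Thus \(A\p B\), which completes the proof.
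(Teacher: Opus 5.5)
Your proof is correct and complete. This paper gives no proof of the lemma: it is quoted from \cite{su25-11} and used as a black box in Proposition~\ref{2.17}. So there is no in-paper argument to compare with, and your proposal supplies a self-contained one.

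The main strength of your approach is the pointwise reformulation of \(A\p B\) as two conditions:
\begin{itemize}
\item[(a)] \(A\subseteq\{x\mid x\le y \text{ for some } y\in B\}\);
\item[(b)] every point of \(B\) lying strictly below some point of \(A\) belongs to \(A\).
\end{itemize}
This uses only the definition \(A\wedge B=A\). It avoids tracking interval endpoints and whether they are attained, which a description of \(\p\) through \(\inf\)/\(\sup\) of the intervals would require. It also shows exactly where each hypothesis enters.

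In the union part, only the convexity of \(\bigcup_i A_i\) is used, via your mixing-of-indices step for (b). The convexity of \(\bigcup_i B_i\) only guarantees that the conclusion concerns elements of \(\mathcal{I}\).

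In the intersection part, two hypotheses do the real work: the convexity of each individual \(B_i\), and a fixed point of \(\bigcap_i B_i\). Together they make the \(i\)-dependent witnesses in (a) collapse to the single witness \(x\) itself. The non-emptiness of \(\bigcap_i A_i\), by contrast, only keeps the conclusion inside \(\mathcal{I}\).
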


\begin{prop} \label{2.17}
Suppose that  \(\ast\) is a continuous t-norm and  \(\va\) is a right continuous t-norm.  Then for any \(f_1, f_2, g\in \mathbf{L_u},\) \(f_1\sqsubseteq  f_2\) implies \( f_1\ast_\va g\sqsubseteq  f_2\ast_\va g\).\end{prop}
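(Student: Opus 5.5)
We plan to reduce the monotonicity of \(\ast_\va\) with respect to \(\sq\) to a statement about cuts, using Proposition \ref{17}. By Proposition \ref{3.5}, all of \(f_1\ast_\va g\), \(f_2\ast_\va g\) lie in \(\mathbf{L_u}\subseteq\mathbf{L}\). It therefore suffices to show
\[(f_1\ast_\va g)^{\hat{\alpha}}\p (f_2\ast_\va g)^{\hat{\alpha}}\quad\text{for every }\alpha\in(0,1),\]
which is criterion (ii) of Proposition \ref{17}.

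First, since \(\va\) is a t-norm, it is monotone increasing in each place. Proposition \ref{.14} then gives
\[(f_k\ast_\va g)^{\hat{\alpha}}=\bigcup_{\alpha_1\va\alpha_2>\alpha} f_k^{\alpha_1}\ast g^{\alpha_2},\qquad k=1,2.\]
The union runs over the same index set \(\Lambda_\alpha=\{(\alpha_1,\alpha_2)\mid \alpha_1\va\alpha_2>\alpha\}\) for both \(k\). Since \(\alpha_1\va\alpha_2\le \alpha_1\wedge\alpha_2\), every index has \(\alpha_1,\alpha_2>\alpha>0\).

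Second, we compare the pieces termwise. For \((\alpha_1,\alpha_2)\in\Lambda_\alpha\) with \(\alpha_1<1\), we have \(f_1^{\alpha_1}\p f_2^{\alpha_1}\) by Proposition \ref{17}(iii), since \(f_1\sq f_2\). The case \(\alpha_1=1\) is not covered by (iii), which is stated only for \(a\in(0,1)\). For it we argue separately: \(f_k^1=\bigcap_{\beta<1}f_k^\beta\), and these sets are nonempty by Proposition \ref{2.5}. The second half of Lemma \ref{15} then yields \(f_1^1\p f_2^1\).

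All these cuts are nonempty closed subintervals, because the functions are normal, convex and attain the value 1. Hence \(f_k^{\alpha_1}\in\mathcal{I}\) and \(g^{\alpha_2}\in\mathcal{I}\). Lemma \ref{18}, with commutativity of \(\ast\), gives \(f_1^{\alpha_1}\ast g^{\alpha_2}\p f_2^{\alpha_1}\ast g^{\alpha_2}\). Each product is again a subinterval, since \(\ast\) is continuous and the cuts are connected.

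Third, both unions are convex. This follows either from Fact \ref{9}, as in the proof of Proposition \ref{2.8} (each union contains the common nonempty set \(f_k^1\ast g^1\)), or directly because they are strong cuts of convex functions. The first half of Lemma \ref{15} then gives \((f_1\ast_\va g)^{\hat\alpha}\p(f_2\ast_\va g)^{\hat\alpha}\), and Proposition \ref{17} concludes \(f_1\ast_\va g\sq f_2\ast_\va g\).

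The main obstacle is bookkeeping rather than depth. We must ensure every set fed into Lemmas \ref{18} and \ref{15} is a genuine nonempty subinterval, and handle the boundary level \(\alpha_1=1\) excluded from Proposition \ref{17}(iii). If the intersection argument for \(\alpha_1=1\) proves awkward, an alternative is to note that, by right continuity of \(\va\), any index with \(\alpha_1=1\) can be replaced by some \(\alpha_1'<1\) with \(\alpha_1'\va\alpha_2>\alpha\). The larger set \(f_k^{\alpha_1'}\ast g^{\alpha_2}\) then contains the dropped piece, so discarding those indices leaves the unions unchanged.
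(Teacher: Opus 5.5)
Your proof is correct and follows the same route as the paper: you pass to strong cuts via Proposition~\ref{17}, decompose them with Proposition~\ref{.14}, compare the pieces with Lemma~\ref{18}, and glue them with Lemma~\ref{15}; your treatment of the level $\alpha_1=1$ (getting $f_1^1\p f_2^1$ from the intersection half of Lemma~\ref{15}) fills a boundary case the paper's proof passes over, since the paper only records $f_1^b\p f_2^b$ for $b\in(0,1)$. Discard the fallback, however: replacing $\alpha_1=1$ by some $\alpha_1'<1$ with $\alpha_1'\va\alpha_2>\alpha$ needs left continuity of $\va$ at $1$, not right continuity, and it fails for the drastic product (which is right continuous), where $\alpha_1'\va\alpha_2=0$ whenever $\alpha_1',\alpha_2<1$.
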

\begin{proof}By Proposition \ref{17},   \(f_1^{b}\p f_2^{b}\) for all \(b\in(0,1)\). By Lemma \ref{18},  for all \(b,c\in(0,1)\), \[f_1^{b}\ast g^c\p f_2^{b}\ast g^c.\]  
From Proposition  \ref{.14} and  Lemma \ref{15}, for all \(a\in(0,1)\), 
\[(f_1\ast_\va g)^{\hat{a}}=\bigcup_{b\va c>a}f_1^{b}\ast g^c \p\bigcup_{b\va c>a}f_2^{b}\ast g^c=(f_2\ast_\va g)^{\hat{a}}.\] 
Hence, \( f_1\ast_\va g\sqsubseteq  f_2\ast_\va g\).
\end{proof}
\begin{prop}\cite{hu141}\label{.17}
Let \(\ast\) and \(\va\) be two t-norms. Then  convolution  \(\ast_\va: \mathbf{M}^2\to \mathbf{M}\) is commutative and  \(\bar{1}\) is  the unit.
\end{prop}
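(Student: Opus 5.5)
The plan is to verify both claims pointwise from Definition \ref{de}, using only the four axioms (T1)--(T4) for \(\ast\) and for \(\va\). No continuity is needed, so the argument works on all of \(\mathbf{M}\).

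For commutativity, fix \(f,g\in\mathbf{M}\) and \(x\in[0,1]\), and write
\[(g\ast_\va f)(x)=\bigvee_{a\ast b=x} g(a)\va f(b).\]
I will rename the bound variables \((a,b)\mapsto(b,a)\). By (T1) for \(\ast\), the condition \(b\ast a=x\) is equivalent to \(a\ast b=x\), so the index set is symmetric under this swap. By (T1) for \(\va\), each term \(g(b)\va f(a)\) equals \(f(a)\va g(b)\). Hence the supremum coincides with \((f\ast_\va g)(x)\). This holds even when the index set is empty, since both sides are then \(\bigvee\varnothing=0\).

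For the unit, fix \(f\in\mathbf{M}\) and \(x\in[0,1]\), and split the pairs \((a,b)\) with \(a\ast b=x\) into two classes according to whether \(b=1\) or \(b\neq1\).
\begin{itemize}
\item If \(b\ne 1\), then \(\bar{1}(b)=0\). By (T3) and (T4) for \(\va\) (together with (T1)), \(f(a)\va 0\le 1\va 0=0\), so every such term vanishes.
\item If \(b=1\), then (T4) for \(\ast\) gives \(a=a\ast 1=x\). So the only pair in this class is \((x,1)\), and it always exists. It contributes \(f(x)\va\bar{1}(1)=f(x)\va 1=f(x)\).
\end{itemize}
Therefore \((f\ast_\va\bar{1})(x)=f(x)\vee 0=f(x)\). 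Combined with commutativity, this also gives \(\bar{1}\ast_\va f=f\).

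There is no real obstacle here. The only points needing care are two bookkeeping checks. First, the index set in the unit computation is nonempty, because the pair \((x,1)\) always lies in it. Second, the value \(f(a)\va 0=0\) must be derived from monotonicity and boundary behaviour rather than assumed.
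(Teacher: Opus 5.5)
Your argument is correct. The swap $(a,b)\mapsto(b,a)$, combined with (T1) for both $\ast$ and $\va$, gives commutativity. Splitting the pairs with $a\ast b=x$ according to whether $b=1$, and using $a\ast 1=a$ and $f(a)\va 0=0$, shows $f\ast_\va\bar{1}=f$ on all of $\mathbf{M}$.

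The paper gives no proof of this proposition; it only cites it from the literature. Your direct pointwise check from the definition of the convolution is the standard argument, and nothing is missing.
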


As the conclusion of this subsection, we  prove the sufficiency of Theorem \ref{0} as follows.
\begin{prop} 
If   \(\ast\) is a continuous t-norm and  \(\va\) is a right continuous t-norm,  then convolution \(\ast_\va\) is a t-norm on   \((\textbf{L}_u,  \sq)\).
\end{prop}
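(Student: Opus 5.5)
The plan is to check the four axioms (T1)--(T4) for \(\ast_\va\) on the bounded poset \((\mathbf{L_u},\sq,\bar{0},\bar{1})\), assembling the results already proved in this subsection. First, by Proposition \ref{3.5}, \(\ast_\va\) maps \(\mathbf{L_u}^2\) into \(\mathbf{L_u}\), so it is a genuine binary operation on \(\mathbf{L_u}\). Normality and convexity of \(f\ast_\va g\) come from Proposition \ref{2.8}; upper semicontinuity comes from Proposition \ref{2.10}, which rests on Lemma \ref{2.9}.

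Next come the algebraic axioms. Since \(\ast\) and \(\va\) are t-norms, Proposition \ref{.17} gives commutativity (T1) of \(\ast_\va\) on all of \(\mathbf{M}\), and hence on \(\mathbf{L_u}\). It also gives \(f\ast_\va\bar{1}=f\), which is (T4). Here the top element of \((\mathbf{L_u},\sq)\) is \(\bar{1}\), the characteristic function of \(\{1\}\), and \(\bar{1}\in\mathbf{L_u}\) because the singleton \(\{1\}\) is closed and convex. Associativity (T2) on \(\mathbf{L_u}\) is exactly the conclusion of Proposition \ref{2.13}. That proposition uses the attained-maximum representation from Lemma \ref{2.11}, which is where right continuity of \(\va\) is essential.

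For monotonicity (T3), let \(g\sq h\) with \(f,g,h\in\mathbf{L_u}\). Proposition \ref{2.17} gives \(g\ast_\va f\sq h\ast_\va f\), and commutativity (T1) turns this into \(f\ast_\va g\sq f\ast_\va h\). One point to check is that Proposition \ref{17} is stated for \(\mathbf{L}\). This is harmless because \(\mathbf{L_u}\subseteq\mathbf{L}\), and the order \(\sq\) on \(\mathbf{L_u}\) is the restriction of the convolution order. The same restriction argument is what makes the use of Proposition \ref{17} inside the proof of Proposition \ref{2.17} legitimate.

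There is no substantive obstacle left at this stage: the real difficulty was closure under upper semicontinuity (Lemma \ref{2.9}) and the associativity it enables, and both are already established. The only care needed is to make sure every cited result is applied to elements of \(\mathbf{L_u}\). In particular, all intermediate convolutions used in associativity must lie in \(\mathbf{L_u}\), and this again follows from Proposition \ref{3.5}.
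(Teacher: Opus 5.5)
Your proposal is correct and follows the paper's proof. The paper likewise gets closure from Proposition \ref{3.5}, commutativity and the unit $\bar{1}$ from Proposition \ref{.17}, associativity from Proposition \ref{2.13}, and monotonicity from Proposition \ref{2.17}. Your extra remarks make explicit what the paper leaves implicit: monotonicity in the second argument via commutativity, $\bar{1}\in\mathbf{L_u}$, and the legitimacy of applying results stated for $\mathbf{L}$ to $\mathbf{L_u}$.
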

\begin{proof}
By Proposition  \ref{3.5},   convolution \(\ast_\va\) is closed on \(\textbf{L}_u\).  By Proposition \ref{2.13}, Proposition \ref{2.17} and Proposition \ref{.17}, convolution \(\ast_\va\) is a t-norm on \((\textbf{L}_u,  \sq)\).
\end{proof}

\section{Necessity}

In the subsection, we aim to   prove the necessity of  Theorem \ref{0}.

\begin{prop}\label{0.20}\cite{al06} A binary function  \(T:[0,1]^2\to [0,1]\) is a continuous t-norm if and only if there exists a family of pairwise disjoint  open subintervals  \((a_\alpha, b_\alpha)_{\alpha\in A}\) of  \([0,1]\) and a family of  t-norms  \(T_\alpha: [a_\alpha, b_\alpha]^2\to  [a_\alpha, b_\alpha]\) isomorphic to the product t-norm \(T_P\) or the {\L}uckasiewicz  t-norm \(T_L\)
 such that \(T\) can be represented  by \[  T(x,y)=\begin{cases} T_\alpha(x,y), & (x,y)\in[a_\alpha, b_\alpha]^2,\\ x\wedge y, & \text{otherwise.}
\end{cases}
\] In particular, if  the index set \(A=\varnothing,\)  then \(T=\wedge\).
\end{prop}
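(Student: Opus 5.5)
The statement is the classical Mostert--Shields/Ling representation of continuous t-norms as ordinal sums. I would prove it in two directions, with almost all the work in the necessity part.

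\emph{Sufficiency.} Let \(T\) be given by the displayed formula. Commutativity, monotonicity and the boundary condition \(T(x,1)=x\) follow by checking the cases \(x,y\) in the same block \([a_\alpha,b_\alpha]\) or not. For associativity I would note that whenever at least two of \(x,y,z\) lie in different blocks (or outside every block), every partial product reduces to a minimum. This holds because each \(T_\alpha\) maps \([a_\alpha,b_\alpha]^2\) into \([a_\alpha,b_\alpha]\). Continuity follows from continuity of each \(T_\alpha\) and of \(\wedge\), together with the agreement \(T_\alpha(a_\alpha,y)=a_\alpha=a_\alpha\wedge y\) and \(T_\alpha(b_\alpha,y)=y=b_\alpha\wedge y\) on the boundaries of the blocks.

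\emph{Necessity, structural part.} Let \(T\) be a continuous t-norm and let \(E=\{x\mid T(x,x)=x\}\) be its set of idempotents. By continuity \(E\) is closed and contains \(0,1\), so \([0,1]\setminus E\) is a disjoint union of open intervals \((a_\alpha,b_\alpha)\) with \(a_\alpha,b_\alpha\in E\).
\begin{itemize}
\item If \(e\in E\) and \(x\le e\), then \(T(x,e)=x\). Indeed, continuity of \(T(\cdot,e)\), with \(T(0,e)=0\) and \(T(e,e)=e\), gives \(u\) with \(T(u,e)=x\), and then \(T(x,e)=T(u,T(e,e))=x\).
\item Hence, whenever \(x\le e\le y\) with \(e\in E\), we get \(x=T(x,e)\le T(x,y)\le x\). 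So \(T=\wedge\) off the blocks.
\item On each block \([a,b]\), \(a=T(a,a)\le T(x,y)\le x\wedge y\le b\), so \(T\) restricts to a continuous t-norm on \([a,b]\) with unit \(b\).
\item This restriction has no interior idempotents, so it is Archimedean: \(T(x,x)<x\) on \((a,b)\), and the powers \(x^{(n)}\) decrease to an idempotent, which must be \(a\).
\end{itemize}

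\emph{Necessity, the main obstacle.} It remains to show that a continuous Archimedean t-norm on \([a,b]\) is isomorphic to \(T_P\) or \(T_L\). I would do this by constructing a continuous additive generator \(t:[a,b]\to[0,\infty]\) with \(t(b)=0\) and \(T(x,y)=t^{-1}(\min(t(x)+t(y),t(a)))\).
\begin{enumerate}
\item Fix \(u\in(a,b)\) and set \(t(u)=1\).
\item Using continuity and strict monotonicity of \(T\) where \(T(x,y)>a\) (cancellativity on the positive region), show that \(u\) has unique \(2^k\)-th roots \(u^{(1/2^k)}\).
\item Define \(t\) on the dyadic powers \(u^{(m/2^k)}\), checking consistency and the additivity \(t(T(x,y))=t(x)+t(y)\) there.
\item Extend \(t\) to all of \([a,b]\) by monotone density, using the Archimedean property to ensure these powers fill \((a,b)\).
\end{enumerate}

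Finally, if \(t(a)<\infty\) (the nilpotent case), then \(x\mapsto b-(b-a)\,t(x)/t(a)\) is an order isomorphism conjugating \(T\) to \(T_L\). If \(t(a)=\infty\) (the strict case), then \(x\mapsto e^{-t(x)}\), rescaled to \([a,b]\), conjugates \(T\) to \(T_P\). I expect steps 2 and 3 — existence and uniqueness of the roots, and the density argument — to be the delicate part. I would handle them by intermediate-value arguments on the diagonal \(x\mapsto T(x,x)\), which is strictly increasing while positive.
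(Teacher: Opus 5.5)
The paper does not prove this proposition: it quotes it from \cite{al06} as the classical Mostert--Shields/Ling ordinal-sum theorem, and uses it only to derive Lemma~\ref{20}. Your outline is the standard proof of that theorem (decomposition by idempotents, then Ling's additive generator on each Archimedean summand), and it is correct in outline. What it adds over the citation is self-containment.

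For the paper's actual use, your hardest step is unnecessary. Lemma~\ref{20} only needs a nontrivial block \([a_\alpha,b_\alpha]\) together with cancellativity of \(T\) on \(\{T(x,y)>a_\alpha\}\) inside that block. To see this, suppose \(x\ast y=u\ast v=c\in(a_\alpha,b_\alpha)\).
\begin{itemize}
\item Then \(x,y\ge c>a_\alpha\).
\item Both \(x,y\ge b_\alpha\) is impossible, since it would give \(x\ast y\ge b_\alpha\).
\item If only \(x\ge b_\alpha\), then \(x\ast y=y\). So \(y=c<v\) and \(x\ne u\).
\item If both lie in the block, cancellativity applies.
\end{itemize}
So your structural part plus the cancellativity lemma already suffice, with no generator and no isomorphism to \(T_P\) or \(T_L\).

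Several steps of your sketch need tightening.

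\emph{Associativity.} Your sentence is false as written. For \(x,y\) in one block and \(z>b_\alpha\), \(T(T(x,y),z)=T_\alpha(x,y)\), which is not a minimum. The correct case split uses two facts: \(T_\alpha\) maps its block into itself, and \(T(p,q)=p\wedge q\) whenever \(p,q\) do not lie in a common block.

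\emph{Continuity.} Pasting does not suffice when infinitely many blocks accumulate. Instead:
\begin{itemize}
\item On block \(\alpha\) you have \(a_\alpha\le T(x,y)\le x\wedge y\le b_\alpha\), so \(|T-\wedge|\le b_\alpha-a_\alpha\) there.
\item Only finitely many blocks have length \(\ge\varepsilon\).
\item Hence \(T\) is within \(\varepsilon\) of the finite ordinal sum built from those blocks. That finite sum is continuous by your boundary-agreement pasting argument.
\item So \(T\) is a uniform limit of continuous functions, hence continuous.
\end{itemize}

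\emph{Cancellativity.} In the necessity part, this is the key lemma and must be proved, not just invoked. Suppose \(T(x,y)=T(x,z)=c>a\) with \(y<z\).
\begin{itemize}
\item By the intermediate value theorem, pick \(w\) with \(T(z,w)=y\). Then \(w\in(a,b)\).
\item Associativity gives \(c=T(c,w)\), hence \(c=T(c,w^{(n)})\) for all \(n\).
\item By continuity, \(T(c,w^{(n)})\to T(c,a)=a\), contradicting \(c>a\).
\end{itemize}
Uniqueness of your dyadic roots follows from this lemma.

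\emph{Density in step 4.} This does not follow from the Archimedean property alone, which only gives \(u^{(n)}\to a\). You also need:
\begin{itemize}
\item \(u^{(1/2^k)}\to b\). The roots increase in \(k\), and their limit is an idempotent in \((a,b]\), so it equals \(b\).
\item No jumps in \(q\mapsto u^{(q)}\). Use the estimate \(u^{(q)}-u^{(q+\delta)}\le\sup_x\bigl(x-T(x,u^{(\delta)})\bigr)\), which tends to \(0\) as \(\delta\to0\) by uniform continuity of \(T\) on \([a,b]^2\).
\end{itemize}
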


\begin{lem}\label{20}
Suppose that  \(T\) is  a continuous t-norm on \([0,1]\) and  \(T\ne \wedge\). Then  there exists  an interval \([a_\al, b_\al]\) such that the restriction of \(T\) on \([a_\al, b_\al]\)  is isomorphic to \(T_p\) or \(T_{L}\), and let \(u,v\in (a_\al, b_\al)\) satisfying \(u\ast v\in (a_\al, b_\al)\). Then  for all \(x,y\in[0,1]\)  satisfying  \(x\ast y=u\ast v\),  \(x=u\) if and only if \(y=v\). Hence, \(x>u\) implies \(y<v\), and \(x<u\) implies  \(y>v.\)
\end{lem}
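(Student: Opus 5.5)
Throughout, write \(\ast\) for \(T\). By Proposition \ref{0.20}, since \(T\ne\wedge\) the index set is nonempty. So there is an interval \([a_\al,b_\al]\) on which \(T\) is given by \(T_\al\), an isomorphic copy of \(T_p\) or \(T_L\). Fix \(u,v\in(a_\al,b_\al)\) with \(w:=u\ast v\in(a_\al,b_\al)\). Since \(T\) is commutative, it suffices to prove one direction: \(x\ast y=w\) and \(x=u\) force \(y=v\). The converse follows by swapping roles. The final sentence of the statement then follows from the first part together with monotonicity. For instance, if \(x>u\) and \(y\ge v\), then \(w=x\ast y\ge u\ast y\ge u\ast v=w\), so \(u\ast y=w\). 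The first part then gives \(y=v\). But then \(x\ast v=u\ast v=w\) with \(x\ne u\), contradicting the first part with the roles of the variables swapped.

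For the main claim, suppose \(u\ast y=w\) and split according to where \(y\) lies.

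\emph{Case \(y\in[a_\al,b_\al]\).} Then \(u\ast y=T_\al(u,y)\). The key fact I will use is that \(T_p\) and \(T_L\) are strictly increasing in each argument wherever the value is positive. Hence \(T_\al(u,\cdot)\) is strictly increasing on the set of \(y\) with \(T_\al(u,y)>a_\al\). Since \(w>a_\al\), this gives \(y=v\). To justify the strictness, transport through the isomorphism \(\varphi:[a_\al,b_\al]\to[0,1]\), which sends \(a_\al\) to \(0\) and \(w\) to a value in \((0,1)\). Then check directly: for \(T_p\), \(\varphi(u)\varphi(y)=\varphi(u)\varphi(v)\) with \(\varphi(u)>0\) gives \(\varphi(y)=\varphi(v)\). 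For \(T_L\), \(\varphi(u)+\varphi(y)-1=\varphi(u)+\varphi(v)-1>0\) gives the same conclusion.

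\emph{Case \(y\notin[a_\al,b_\al]\).} Then \(u\ast y=u\wedge y\) by the representation in Proposition \ref{0.20}. Two subcases arise.
\begin{itemize}
\item If \(y<a_\al\), then \(u\wedge y=y<a_\al<w\), a contradiction.
\item If \(y>b_\al\), then \(u\wedge y=u\). But \(w=T_\al(u,v)<u\): in the \(\varphi\)-picture, \(\varphi(v)<1\) makes both the product and the Łukasiewicz value strictly smaller than \(\varphi(u)\) when they are positive. This is again a contradiction.
\end{itemize}
A careful reading of Proposition \ref{0.20} is needed here, because a point \((u,y)\) with \(y\) in some other summand interval falls under the ``otherwise'' clause. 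That clause yields \(u\wedge y\), which is exactly what I use.

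The main obstacle I expect is bookkeeping rather than ideas. One must make precise that the restriction is order-isomorphic via an increasing bijection \(\varphi\) with \(\varphi(a_\al)=0\) and \(\varphi(b_\al)=1\). One must also ensure that the strict inequality \(w>a_\al\) translates into positivity of the transported value, which is where conditional cancellativity of \(T_p\) and \(T_L\) enters.
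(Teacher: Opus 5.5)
Your proof is correct and takes the same route as the paper, which simply cites the representation in Proposition \ref{0.20} together with the conditional cancellativity of \(T_p\) and \(T_L\). You fill in the details the paper leaves implicit: the case \(y\notin[a_\al,b_\al]\), handled through the \(\wedge\) clause and \(u\ast v<u\), and the derivation of the strict inequalities from the equivalence via monotonicity.
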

\begin{proof}
It  follows  from Proposition \ref{0.20} and the fact that both \(T_p\) and \(T_{L}\) are conditionally cancellative.
\end{proof}

 In the following, we prove the necessity of  Theorem \ref{0}.

\begin{prop}
If \(\ast_\va\) is  a t-norm on \((\mathbf{L_u}, \sq)\), then \(\ast\) is a continuous t-norm and  \(\va \) is right continuous t-norm.
\end{prop}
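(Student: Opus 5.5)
By Proposition \ref{8}(i), if \(\ast_\va\) is a t-norm on \((\mathbf{L_u},\sq)\), then \(\ast\) is already a continuous t-norm and \(\va\) is a t-norm. So the only remaining task is right continuity of \(\va\). By the proposition quoted from \cite{al06} and commutativity, it suffices to show that every section \(x_0\va(-)\) is right continuous. I will argue by contradiction. Suppose there are \(x_0,y_0\in[0,1]\) and a strictly decreasing sequence \(y_n\downarrow y_0\) such that
\[\beta:=\lim_{n\to\infty}x_0\va y_n=\inf_n x_0\va y_n> x_0\va y_0 .\]
The plan is to build \(f,g\in\mathbf{L_u}\) for which \(f\ast_\va g\) is not upper semicontinuous. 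This contradicts the fact that \(\ast_\va\) maps \(\mathbf{L_u}^2\) into \(\mathbf{L_u}\). Concretely, I want a point \(x\) with \((f\ast_\va g)(x)<\beta\), together with points \(x_n\to x\) where \((f\ast_\va g)(x_n)\ge x_0\va y_n\ge\beta\). Then \((f\ast_\va g)^{\beta}\) is not closed.

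The construction splits into two cases according to Proposition \ref{0.20}.

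\emph{Case \(\ast\neq\wedge\).} Fix a Archimedean block \([a_\al,b_\al]\) and choose \(u,v\in(a_\al,b_\al)\) with \(u\ast v\in(a_\al,b_\al)\). Put \(x=u\ast v\). Lemma \ref{20} says that on the fibre \(\{(s,t)\mid s\ast t=x\}\) the map \(s\mapsto t\) is strictly decreasing, with \(s=u\) exactly when \(t=v\).
\begin{itemize}
\item Take \(g\) monotone increasing on \([0,1]\) with \(g\equiv y_0\) on \([0,v]\), \(g\equiv y_n\) on a shrinking sequence of intervals \((v+\delta_{n+1},v+\delta_n]\), and \(g=1\) near \(1\). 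Since \(g(t)\to y_0=g(v)\) as \(t\downarrow v\) and \(g\) is increasing, \(g\) is upper semicontinuous, convex and normal.
\item Take \(f\) monotone decreasing with \(f\equiv 1\) on \([0,c]\) for a suitable \(c\) and \(f\equiv x_0\) on a region around \(u\). This makes \(f\) upper semicontinuous, convex and normal.
\end{itemize}
Using the fibre structure from Lemma \ref{20}, one then estimates \((f\ast_\va g)(x)\). Every admissible pair either has \(t\le v\), giving the value \(f(s)\va y_0\), or has \(s<u\) with small \(f\)-value. The target is \((f\ast_\va g)(x)\le x_0\va y_0<\beta\). For points \(x_n=u\ast t_n\) with \(t_n\downarrow v\) inside the \(y_n\)-stripes, inequality (\ref{2.1}) gives \((f\ast_\va g)(x_n)\ge x_0\va y_n\ge\beta\), and \(x_n\to x\) by continuity of \(\ast\).

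\emph{Case \(\ast=\wedge\).} Here there is no cancellativity. I will instead exploit that \(s\wedge t=x\) forces \(s=x\) or \(t=x\). I take \(g\) as above around \(v=x\) and \(f\) with \(f(x)=x_0\), arranged so that pairs \((s,t)\) with \(s>x,t=x\) or \(s=x,t>x\) contribute at most \(x_0\va y_0\) at \(x\). The stripes of \(g\) to the right of \(x\) again push values at nearby points \(x_n\) up to at least \(\beta\).

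The main obstacle is the simultaneous choice of the shape of \(f\) near \(u\) (in the second case, near \(x\)). It must satisfy all of the following at once:
\begin{itemize}
\item \(f\) stays normal, convex and upper semicontinuous;
\item \(f(u)\) (respectively \(f(x)\)) equals \(x_0\);
\item every other pair on the fibre of \(x\) contributes at most \(x_0\va y_0\);
\item \(f(s)\) stays at least \(x_0\) for the \(s\) that pair with the stripes of \(g\) at the points \(x_n\).
\end{itemize}
My first attempt is to make \(f\equiv x_0\) on an interval \([u,u']\) and let \(f\) reach \(1\) only on a set whose \(\ast\)-products with \([0,v]\) avoid \(x\). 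Upper semicontinuity of \(\ast_\va\)-images forces this level of care, which is exactly why right continuity of \(\va\) is detected.
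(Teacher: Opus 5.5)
Your frame is sound: Proposition~\ref{8} leaves only right continuity of $\va$. It then suffices to find $f,g\in\mathbf{L_u}$, a point $x$ with $(f\ast_\va g)(x)<\beta$, and points $x_n\to x$ with $(f\ast_\va g)(x_n)\ge x_0\va y_n\ge\beta$; the latter bound is free from (\ref{2.1}).

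\textbf{The gap.} The gap is the upper bound at $x$. With the functions you describe it fails in both cases, and your ``main obstacle'' cannot be resolved within your setup.

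\emph{Case $\ast\neq\wedge$.} Lemma~\ref{20} and continuity of $\ast$ show that as $s\uparrow u$ along the fibre $\{(s,t)\mid s\ast t=u\ast v\}$, the partner satisfies $t>v$ and $t\downarrow v$. Your $f$ is decreasing with $f(u)=x_0$, so $f(s)\ge x_0$ there. Thus the ``small $f$-value'' for $s<u$ contradicts your own choice. Your $g$ takes stripe values $y_m$ just right of $v$, so these pairs give $f(s)\va g(t)\ge x_0\va y_m\ge\beta$.

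More decisively, with the plateau $g\equiv y_0$ on $[0,v]$, no $f\in\mathbf{L_u}$ with $f(u)\ge x_0$ can work. By Proposition~\ref{2.5}, $f(c)=1$ for some $c$.
\begin{itemize}
\item If $c\ge u$, then $c\ast t=u\ast v$ for some $t\in[0,v]$. This pair contributes $1\va y_0=y_0\ge\beta$, since $\beta\le\inf_n y_n=y_0$.
\item If $c<u$ (this is exactly your ``first attempt''), convexity gives $f\ge x_0$ on $[c,u]$, and the previous estimate applies.
\end{itemize}

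\emph{Case $\ast=\wedge$.} This fails the same way. The fibre of $x$ under $\wedge$ contains every $(x,t)$ with $t\ge x$. With $f(x)=x_0$ and stripes of $g$ to the right of $x$, the pair $(x,t)$ with $t$ in the $n$-th stripe already contributes $x_0\va y_n\ge\beta$.

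\textbf{The missing idea.} Put zeros of $f$ and $g$ on the sides where the antitone fibre structure would otherwise pair a large value of one function with a large value of the other.

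For $\ast\neq\wedge$ the paper takes both functions increasing:
\begin{itemize}
\item $f=0$ on $[0,u)$, $f=x_0$ on $[u,1)$, $f(1)=1$;
\item $g=0$ on $[0,v)$, with $g$ increasing and right-continuous on $[v,1]$, $g(v)=y_0$, $g(1)=1$.
\end{itemize}
Then pairs with $s<u$ have $f(s)=0$, pairs with $s>u$ have $t<v$ and $g(t)=0$, and $s=u$ forces $t=v$. Hence $(f\ast_\va g)(u\ast v)=x_0\va y_0$.

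For $\ast=\wedge$ the paper takes both functions decreasing and lets the approximating points approach the limit point from the left. Then $(f\wedge_\va g)(z)=f(z)\va g(z)$, since every fibre pair of $z$ only sees values at points $\ge z$.

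\textbf{Two smaller remarks.} First, your step function with value $y_n$ on $(v+\delta_{n+1},v+\delta_n]$ is not upper semicontinuous. An increasing usc function must be right-continuous, and yours jumps up just after each $v+\delta_n$. Any continuous choice, such as the paper's linear $g$, works: by monotonicity of $\va$, the limit $\inf_{y>y_0}x_0\va y$ does not depend on the approximating sequence.

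Second, once the constructions are repaired, your contrapositive version is slightly leaner than the paper's. The paper also bounds $(f\ast_\va g)(u\ast x_n)$ from above, although only the lower bound from (\ref{2.1}) is needed.
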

\begin{proof}
By Proposition \ref{8}, we only need to show that t-norm \(\va\) is right continuous, that is, for any \(a,b\in [0,1]^2\),  \(a\va b^+=a\va b\). Obviously, if  \(a\in\{0,1\}\) or  \(b\in\{0,1\}\), then \(a\va b^+=a\va b\). In the following, suppose that \((a,b)\in(0,1)^2\).
To show that \(a\va b^+=a\va b\), we only need to show that there exists a  strictly  decreasing  sequence \(\{y_n\}\) such that \(\inf y_n=b\) and \(\inf   a\va y_n =a\va b\) case by case.

Case 1. \(\ast=\wedge.\)  
Choose arbitrary \(u\in (0,1)\), and define functions \(f,g\in \mathbf{L_u}\)  as  follows. \[ f(x)=\begin{cases} 1, & x=0,\\ a,& x\in (0, u],\\ 0, & \text{otherwise.}\end{cases} \quad g(x)=\begin{cases} \frac{b-1}{u}x+1, & x\in[0,u]\\ 0, & \text{otherwise,}\end{cases}\]   where \(g(0)=1\) and \(g(u)=b\).  Since \(f\) and \(g\) is monotone decreasing, then for any \(z\in[0,1]\), \[(f\wedge_\va g)(z)=\bigvee_{x\wedge y=z}f(x)\va g(y)=(\bigvee_{x=z, y\ge z  }f(x)\va g(y))\vee (\bigvee_{x\ge z, y=z}f(x)\va g(y))=f(z)\va g(z).\]
Hence, \(f\wedge_\va g\) is monotone decreasing, and \[(f\wedge_\va g)(u)=f(u)\va g(u)=a\va b. \]  
Let \(\{x_n\}_{ n\in \mathbb{N}^+}\) be a strictly increasing sequence with limit \(u\), and write \(y_n=g(x_n)\). Obviously,   \(\{y_n\}\)  is  strictly decreasing  with limit \(b\), and
 \[(f\wedge_\va g)(x_n)= f(x_n)  \va g(x_n)=a\va g(x_n)=a\va y_n.\] 
 Since \(f\wedge_\va g\) is monotone decreasing, and \(f\wedge_\va g\in \mathbf{L_u}\) is  upper semicontinuous, then  \[ (f\wedge_\va g)(u-)=(f\wedge_\va g)(u).\]
Since  \(\{x_n\}_{ n\in \mathbb{N}^+}\) is strictly increasing with limit \(u\),   then \[\inf (f\wedge_\va g)(x_n)= (f\wedge_\va g)(u-). \]
From above four equations, we have  \[\inf a\va y_n= \inf (f\wedge_\va g)(x_n)= (f\wedge_\va g)(u-)=(f\wedge_\va g)(u)=a\va b.\] as desired.
 
Case 2. \(\ast\ne \wedge.\)  Since \(\ast\) is a continuous t-norm,  by Proposition \ref{0.20},  there exists an subinterval \([a_\al, b_\al]\) of \([0,1]\) such that the restriction of \(\ast\) on \([a_\al, b_\al]\) is isomorphic to \(T_p\) or \(T_{\L}\). Let \(u,v\in(a_\al,b_\al)\) with \(u\ast v>a_\al\).  Obviously, \(u\ast v<v.\)

Define functions \(f,g\in \mathbf{L_u}\)  as  follows.
 \[f(x)=\begin{cases} 0, & x\in[0, u),\\ a, & x\in[u, 1),\\ 1,& x=1. \end{cases}\quad
 g(x)=\begin{cases}0, & x\in[0,v),\\ \frac{1-b}{1-v}x+\frac{b-v}{1-v},& x\in[v,1],\end{cases}\quad  \] where \(g(v)=b\) and \( g(1)=1\). 
 
 \emph{Firstly}, we show that \((f\ast_\va g)(u\ast v)=a\va b.\) Since \((f\ast_\va g)(u\ast v)\ge f(u)\va g(v)=a\va b,\)  we only need to show that for all \(x,y\in [0,1]^2\) satisfying \(x\ast y=u\ast v\), \(f(x)\va g(y)\le a\va b\). If \(x>u\), by Lemma \ref{20},  \(y<v\), and thus, \(f(x)\va g(y)=0 .\) If \(x=u\),  by Lemma \ref{20}, \(y=v\), and thus, \(f(x)\va g(y)= f(u)\va g(v)=a\va b\). If \(x<u\), then \(f(x)\va g(y)= 0\).  Hence, for all \(x,y\in [0,1]^2\) satisfying \(x\ast y=u\ast v\), \(f(x)\va g(y)\le a\va b\) as desired.

\emph{Next},  let  \(\{x_n\}_{ n\in \mathbb{N}^+}\) be  a strictly decreasing sequence with limit \(v\) satisfying \(u\ast x_n<v\) for each \(n\). The existence of \(\{x_n\}_{ n\in \mathbb{N}^+}\) follows from the fact that \(\ast\) is continuous and \(u\ast v<v\).  
 We claim that \[(f\ast_\va g)(u\ast x_n)=a\va g(x_n).\] Obviously,   
 \[(f\ast_\va g)(u\ast x_n)\ge f(u)\va g(x_n)=a\va g(x_n).\] To show another direction, i.e.   \((f\ast_\va g)(u\ast x_n)\le a\va g(x_n),\) we only need to show that for all \(x,y\in [0,1]^2\) satisfying \(x\ast y=u\ast x_n\), \[f(x)\va g(y)\le a\va g(x_n)\] case by case. 
 
 Case 2.1.  \(0\le x<u\). In this case, \(f(x)\va g(y)=0\va g(y)= 0\);  
 
 Case 2.2.  \(x=u\). By Lemma \ref{20}, \(y=x_n\), and thus,  \(f(x)\va g(y)= f(u)\va g(x_n)=a\va g(x_n).\) 
 
 Case 2.3.  \(u<x<1\). By Lemma \ref{20}, \(y<x_n\). Since  \(g\) is monotone  increasing,  \[f(x)\va g(y)\le f(x)\va g(x_n)=a\va g(x_n).\]
 
Case 2.4. \(x=1\). Since \(y=u\ast x_n<v\),  then \(g(y)=0\) and  \(f(x)\va g(y)=0.\)

\emph{Last}, let \(y_n=g(x_n),\) then \(\inf y_n=\inf g(x_n)=g (\inf x_n)=g(v)= b\). Since the sequence \[\{(f\ast_\va g)(u\ast x_n)\}=\{a\va g(x_n)\}\] is monotone decreasing, by Lemma \ref{1},  \[\inf (f\ast_\va g)(u\ast x_n)=(f\ast_\va g)(\lim_{n\ra \infty }u\ast x_n)=(f\ast_\va g)(u\ast v).\]
Hence,  \[\inf a\va y_n=\inf a\va g(x_n)=\inf (f\ast_\va g)(u\ast x_n)=(f\ast_\va g)(u\ast v)=a\va b\] as desired.
 \end{proof}

  \section{Conclusion}
 In this study, we have investigated the construction of t-norms on \((\mathbf{L_u},\sq)\). The main results are as follows:
 \begin{itemize}
 \item[1)] The convolution \(\ast_\va\) is  a   t-norm on \((\mathbf{L}_\mathbf{u},\sq)\) if and only if it is a \(t_r\)-norm on  \((\mathbf{L}_\mathbf{u},\sq)\), see Proposition \ref{0.1}.  
 \item[2)] The convolution \(\ast_\va\) is a t-norm on \((\mathbf{L_u}, \sq)\)  if and only if \(\ast\) is a continuous t-norm  and \(\va\) is a right continuous t-norm, see Theorem \ref{0}.
  \end{itemize}

\vfill
\end{document}